\theoremstyle{plain}
\newtheorem{theoreme}{Theorem}[section]
\newtheorem{prop}[theoreme]{Proposition}
\newtheorem{cor}[theoreme]{Corollary}
\theoremstyle{remark}
\newtheorem{definition}[theoreme]{Definition}
\newtheorem{ex}[theoreme]{Example}
\newtheorem{remark}[theoreme]{Remark}
\newtheorem{c-ex}[theoreme]{Counter-example}
\date{}
\DeclareMathOperator{\supp}{supp}
\DeclareMathOperator{\Proj}{Proj}
\DeclareMathOperator{\ct}{ct}
\DeclareMathOperator{\ty}{type}
\author[O. Tout]{Omar Tout}
\address{Instytut Matematyczny, Polska Akademia Nauk,
ul. Śniadeckich 8,
00-656 Warszawa,
Poland}
\email{otout@impan.pl}
\title[The center of the wreath product of symmetric groups algebra]{The center of the wreath product of\\ symmetric groups algebra}
\keywords{symmetric groups, wreath products, structure coefficients, centers of finite groups algebras}
\subjclass[2010]{ Primary 05E15; Secondary 05E05, 05E10, 20C30.}
\thanks{This research is supported by Narodowe Centrum Nauki, grant number 2017/26/A/ST1/00189.}
\begin{document}
\maketitle
\begin{abstract} We consider the wreath product of two symmetric groups as a group of blocks permutations and we study its conjugacy classes. We give a polynomiality property for the structure coefficients of the center of the wreath product of symmetric groups algebra. This allows us to recover an old result of Farahat and Higman about the polynomiality of the structure coefficients of the center of the symmetric group algebra and to generalize our recent result about the polynomiality property of the structure coefficients of the center of the hyperoctahedral group algebra. A particular attention is paid to the cases when the blocks contain two or three elements.
\end{abstract}

\section{Introduction}

The conjugacy classes of the symmetric group $\mathcal{S}_n$ can be indexed by partitions of $n.$ The conjugacy class associated to a partition $\lambda$ is the set of all permutations with cycle-type $\lambda.$ The center of the symmetric group algebra is the algebra over $\mathbb{C}$ generated by the conjugacy classes of the symmetric group. Its structure coefficients have nice combinatorial properties. In \cite{FaharatHigman1959}, Farahat and Higman, gave a polynomiality property for the structure coefficients of the center of the symmetric group algebra. By introducing partial permutations in \cite{Ivanov1999}, Ivanov and Kerov, gave a combinatorial proof to this result.\\

We introduce in this paper the group $\mathcal{B}_{kn}^k$ which permutes $n$ blocks of $k$ elements each. The permutation of the $k$ elements in each block is allowed. This group is the symmetric group $\mathcal{S}_n$ if $k=1$ and in case $k=2$ it is the hyperoctahedral group $\mathcal{H}_n$ on $2n$ elements. In general, we show that the group $\mathcal{B}_{kn}^k$ is isomorphic to the wreath product $\mathcal{S}_k\sim \mathcal{S}_n$ of the symmetric group $\mathcal{S}_k$ by the symmetric group $\mathcal{S}_n.$ It is well known that the conjugacy classes of $\mathcal{H}_n$ are indexed by pairs of partitions $(\lambda,\delta)$ verifying $|\lambda|+|\delta|=n,$ see \cite{geissinger1978representations} and \cite{stembridge1992projective}. We show that in general, for any fixed integer $k,$ the conjugacy classes of the group $\mathcal{B}_{kn}^k$ are indexed by families of partitions $\lambda=(\lambda(\rho))_{\rho\vdash k}$ indexed by the set of partitions of $k$ such that the sum of the sizes of all $\lambda(\rho)$ equals $n.$ This comes with no surprise since it was shown by Specht in \cite{specht1932verallgemeinerung} that the conjugay classes of $\mathcal{S}_k \sim \mathcal{S}_n$ are indexed by families of partitions indexed by the set of partitions of $k,$ see \cite{kerber2006representations} and \cite{McDo} for more information about this fact.\\

Recently, in \cite{Tout2017}, we developed a framework in which the polynomiality property for double-class algebras, and subsequently centers of groups algebra, holds. In particular, we showed that our framework contains the sequence of the symmetric groups and that of the hyperoctahedral groups. Thus we obtained again the result of Farahat and Higman for the structure coefficients of the center of the symmetric group algebra. In addition we gave a polynomiality property for the structure coefficients of the center of the hyperoctahedral group algebra in \cite[Section 6.2]{Tout2017}.

In this paper we show that the general framework we gave in \cite{Tout2017} contains the sequence of groups $(\mathcal{B}_{kn}^k)_n$ when $k$ is a fixed integer. Thus, we will be able to give a polynomiality property for the structure coefficients of the center of the group $\mathcal{B}_{kn}^k$ algebra. A particular attention to the cases $k=2$ (hyperoctahedral group) and $k=3$ is given. The question whether the partial permutation concept of Ivanov and Kerov can be generalized to obtain a combinatorial proof to this result arises directly. We will try to answer this question in another paper.\\

The paper is organized as follows. In Section \ref{sec_2}, we review all necessary definitions of partitions and we describe the conjugacy classes of the symmetric group. Then, we define explicitly the group $\mathcal{B}_{kn}^k$ in section  \ref{sec_3} and we study in details its conjugacy classes. Then, we show that it is isomorphic to the wreath product $\mathcal{S}_k\sim \mathcal{S}_n.$ In sections \ref{sec_hyp} and \ref{seck=3} a special treatment is given for the cases $k=2$ and $k=3$ respectively. The last section contains our main result, that is a polynomiality property for the structure coefficients of the center of the group $\mathcal{B}_{kn}^k$ algebra. In addition some examples are given for the cases $k=2$ and $k=3.$

\section{Partitions and conjugacy classes of the Symmetric group}\label{sec_2}

If $n$ is a positive integer, we denote by $\mathcal{S}_n$ the symmetric group of permutations on the set $[n]:=\lbrace 1,2,\cdots,n\rbrace.$ A \textit{partition} $\lambda$ is a list of integers $(\lambda_1,\ldots,\lambda_l)$ where $\lambda_1\geq \lambda_2\geq\ldots \lambda_l\geq 1.$ The $\lambda_i$ are called the \textit{parts} of $\lambda$; the \textit{size} of $\lambda$, denoted by $|\lambda|$, is the sum of all of its parts. If $|\lambda|=n$, we say that $\lambda$ is a partition of $n$ and we write $\lambda\vdash n$. The number of parts of $\lambda$ is denoted by $l(\lambda)$. We will also use the exponential notation $\lambda=(1^{m_1(\lambda)},2^{m_2(\lambda)},3^{m_3(\lambda)},\ldots),$ where $m_i(\lambda)$ is the number of parts equal to $i$ in the partition $\lambda.$ In case there is no confusion, we will omit $\lambda$ from $m_i(\lambda)$ to simplify our notation. If $\lambda=(1^{m_1(\lambda)},2^{m_2(\lambda)},3^{m_3(\lambda)},\ldots,n^{m_n(\lambda)})$ is a partition of $n$ then $\sum_{i=1}^n im_i(\lambda)=n.$ We will dismiss $i^{m_i(\lambda)}$ from $\lambda$ when $m_i(\lambda)=0.$ For example, we will write $\lambda=(1^2,3,6^2)$ instead of $\lambda=(1^2,2^0,3,4^0,5^0,6^2,7^0).$ If $\lambda$ and $\delta$ are two partitions we define the \textit{union} $\lambda \cup \delta$ and subtraction $\lambda \setminus \delta$ (if exists) as the following partitions:
$$\lambda \cup \delta=(1^{m_1(\lambda)+m_1(\delta)},2^{m_2(\lambda)+m_2(\delta)},3^{m_3(\lambda)+m_3(\delta)},\ldots).$$
$$\lambda \setminus \delta=(1^{m_1(\lambda)-m_1(\delta)},2^{m_2(\lambda)-m_2(\delta)},3^{m_3(\lambda)-m_3(\delta)},\ldots) \text{ if $m_i(\lambda)\geq m_i(\delta)$ for any $i.$ }$$
A partition is called \textit{proper} if it does not have any part equal to 1. The proper partition associated to a partition $\lambda$ is the partition $\bar{\lambda}:=\lambda \setminus (1^{m_1(\lambda)})=(2^{m_2(\lambda)},3^{m_3(\lambda)},\ldots).$ 

\bigskip

The \textit{cycle-type} of a permutation of $\mathcal{S}_n$ is the partition of $n$ obtained from the lengths of the cycles that appear in its decomposition into product of disjoint cycles. For example, the permutation $(2,4,1,6)(3,8,10,12)(5)(7,9,11)$ of $\mathcal{S}_{12}$ has cycle-type $(1,3,4^2).$ In this paper we will denote the cycle-type of a permutation $\omega$ by $\ct(\omega).$ It is well known that two permutations of $\mathcal{S}_n$ belong to the same conjugacy class if and only if they have the same cycle-type. Thus the conjugacy classes of the symmetric group $\mathcal{S}_n$ can be indexed by partitions of $n.$ If $\lambda=(1^{m_1(\lambda)},2^{m_2(\lambda)},3^{m_3(\lambda)},\ldots,n^{m_n(\lambda)})$ is a partition of $n,$ we will denote by $C_\lambda$ the conjugacy class of $\mathcal{S}_n$ associated to $\lambda:$
$$C_\lambda:=\lbrace \sigma\in \mathcal{S}_n \text{ $\mid$ } \ct(\sigma)=\lambda \rbrace.$$
The cardinal of $C_\lambda$ is given by:
$$|C_\lambda|=\frac{n!}{z_\lambda},$$
where
$$z_\lambda:=1^{m_1(\lambda)}m_1(\lambda)!2^{m_2(\lambda)}m_2(\lambda)!\cdots n^{m_n(\lambda)}m_n(\lambda)!.$$

\section{The conjugacy classes of the group $\mathcal{B}_{kn}^{k}$}\label{sec_3}

In this section we define the group $\mathcal{B}_{kn}^{k}$ then we study its conjugacy classes. We show in Proposition \ref{class_conj_classes} that these latter are indexed by families of partitions indexed by all partitions of $k.$ Then in Proposition \ref{Prop_size_conj} we give an explicit formula for the size of any of its conjugacy classes. The group $\mathcal{B}_{kn}^{k}$ is isomorphic to the wreath product $\mathcal{S}_k\sim \mathcal{S}_n$ as will be shown in Proposition \ref{isom}. However, we decided to work with this copy of the wreath product in this paper since it seems very natural to present our main result.\\

If $i$ and $k$ are two positive integers, we denote by $p_{k}(i)$ the following set of size $k:$ 
$$p_k(i):=\lbrace (i-1)k+1, (i-1)k+2, \cdots , ik\rbrace.$$ 

The above set $p_k(i)$ will be called a $k$-tuple in this paper. We define the group $\mathcal{B}_{kn}^{k}$ to be the subgroup of $\mathcal{S}_{kn}$ formed by permutations that send each set of the form $p_{k}(i)$ to another set with the same form:
$$\mathcal{B}_{kn}^{k}:=\lbrace w \in \mathcal{S}_{kn}; \ \forall \ 1 \leq r \leq n, \ \exists \ 1 \leq r' \leq n \text{ such that } w(p_{k}(r))=p_{k}(r')\rbrace.$$

\begin{ex} $\begin{pmatrix}
	1&2&3&&4&5&6\\
	1&3&2&&6&5&4
	\end{pmatrix}\in \mathcal{B}_{6}^{3}$ but $\begin{pmatrix}
	1&2&3&&4&5&6\\
	1&3&6&&2&4&6
	\end{pmatrix}\notin \mathcal{B}_{6}^{3}.$
\end{ex}

The group $\mathcal{B}_{kn}^{k}$ as it is defined here appears in \cite[Section 7.4]{Tout2017}. When $k=1,$ it is clear that $\mathcal{B}_{n}^1$ is the symmetric group $\mathcal{S}_n.$ When $k=2,$ the group $\mathcal{B}_{2n}^2$ is the hyperoctahedral group $\mathcal{H}_n$ on $2n$ elements, see \cite{toutejc} where the author treats $\mathcal{H}_n$ as being the group $\mathcal{B}_{2n}^2.$ It would be clear to see that the order of the group $\mathcal{B}_{kn}^{k}$ is 
$$|\mathcal{B}_{kn}^{k}|=(k!)^nn!$$ 

The decomposition of a permutation $\omega\in \mathcal{B}_{kn}^{k}$ into product of disjoint cycles has remarkable patterns. To see this, fix a $k$-tuple $p_{k}(i)$ and a partition $\rho=(\rho_1,\rho_2,\cdots,\rho_l)$ of $k.$ Suppose now that while writing $\omega$ as product of disjoint cycles we get among the cycles the following pattern:
\begin{equation}\label{canonical_decom}
\mathcal{C}_1=(a_1,\cdots,a_2,\cdots,a_{\rho_1},\cdots),
\end{equation}
$$
\mathcal{C}_2=(a_{\rho_1+1},\cdots,a_{\rho_1+2},\cdots,a_{\rho_1+\rho_2},\cdots),
$$
$$\vdots$$
$$
\mathcal{C}_l=(a_{\rho_1+\cdots+\rho_{l-1}+1},\cdots,a_{\rho_1+\cdots+\rho_{l-1}+2},\cdots,a_{\rho_1+\cdots+\rho_{l-1}+\rho_l},\cdots),
$$
where $$\lbrace a_1,a_2,\cdots,a_{\rho_1+1},\cdots,a_{\rho_1+\cdots+\rho_{l-1}+\rho_l}\rbrace=p_k(i) \text{ for a certain $i\in [n]$}.$$
We should remark that since $\omega\in \mathcal{B}_{kn}^{k},$ if we consider $b_j=\omega(a_j)$ for any $1\leq j\leq |\rho|$ then there exists $r\in [n]$ such that:
$$p_{k}(r)=\lbrace b_1,b_2,\cdots,b_{\rho_1+1},\cdots,b_{\rho_1+\cdots+\rho_{l-1}+\rho_l}\rbrace.$$
This can be redone till we reach $a_2$ which implies that in cycle $\mathcal{C}_1,$ we have the same number of elements between $a_i$ and $a_{i+1}$ for any $1\leq i\leq \rho_1-1.$ Thus the size of the cycle $\mathcal{C}_1$ is a multiple of $\rho_1,$ say $|\mathcal{C}_1|=m\rho_1.$ The same can be done for all the other cycles $\mathcal{C}_i$ and in fact for any $1\leq i\leq l,$  $|\mathcal{C}_i|=m\rho_i.$  In addition, if we take the set of all the elements that figure in the cycles $\mathcal{C}_i$ we will get a disjoint union of $m$ $k$-tuples. That means:
$$\sum_{j=1}^{l}|\mathcal{C}_j|=\sum_{j=1}^{l} m\rho_j=mk.$$
This integer $m$ can be obtained from any of the cycles $\mathcal{C}_j$ by looking to the number of elements that separate two consecutive elements of the same $k$-tuple. Now construct the partition $\omega(\rho)$ by grouping all the integers $m$ as above.

\begin{remark} We should pay the reader's attention to two important remarks after this construction. First of all, for any partition $\rho$ of $k$ there are $rk$ elements involved when adding the part $r$ to $\omega(\rho).$ That means: 
$$\sum_{\rho\vdash k}\sum_{r\geq 1}krm_r(\omega(\rho))=kn \text{ which implies } \sum_{\rho\vdash k}\sum_{r\geq 1}rm_r(\omega(\rho))=n. $$
Now let $p_\omega$ denotes the blocks permutation of $n$ associated to $\omega.$ That is $p_\omega(i)=j$ whenever $\omega(p_k(i))=p_k(j).$ Adding a part $r$ to one of the partitions $\omega(\rho)$ implies that $r$ is the length of one of the cycles of $p_\omega.$ Thus we have:
\begin{equation*}
\bigcup_{\rho\vdash k}\omega(\rho)=\ct(p_\omega).
\end{equation*}
In other words, one may first find the cycle-type of $p_\omega$ and then distribute all its parts between the partitions $\omega(\rho)$ according to the above construction. 
\end{remark}

\begin{ex}\label{k=3,equivex}
Consider the following permutation, written in one-line notation, $\omega$ of $\mathcal{B}^3_{24}$ 

$$\omega=12~~10~~11\,\,\,\, 20~~21~~19\,\,\,\, 8~~7~~9\,\,\,\, 1~~2~~3\,\,\,\, 16~~18~~17 \,\,\,\,15~~14~~13 \,\,\,\, 5~~4~~6 \,\,\,\, 22~~23~~24$$
Its decomposition into product of disjoint cycles is:
$$\omega=\color{red}(1,12,3,11,2,10)\color{blue}(4,20)(5,21,6,19)\color{green}(7,8)(9)\color{brown}(13,16,15,17,14,18)\color{black}(22)(23)(24).$$
The first red cycle contains all the elements of $p_3(1)$ thus it contributes to $\omega(3).$ In it, there are two elements between $1$ and $3$ thus we should add $2$ to the partition $\omega(3).$  The brown cycle contributes to $\omega(3)$ by $2$ also, thus $\omega(3)=(2,2).$ By looking to the blue cycles we see that $5$ and $6$ belong to the same cycle while $4$ belongs to the other, thus these cycles will contribute to $\omega(2,1).$ The distance between $5$ and $6$ is two (the same can be done by looking to the cycle containing $4$ but in this case since $4$ is the only element of $p_3(3)$ in this cycle we count the distance between $4$ and $4$ which is $2$) which means that $\omega(2,1)$ contains a cycle of length $2.$ The green cycle will add $1$ to $\omega(2,1)$ to become the partition $(2,1).$ The black cycles give $\omega(1,1,1)=(1).$ The reader should remark that $p_\omega$ is the permutation $(1,4)(2,7)(3)(5,6)(8)$ of $8$ and that:
$$\omega(3)\cup \omega(2,1)\cup \omega(1^3)=\ct(p_\omega)=(1^2,2^3).$$
\end{ex}
 
\begin{definition}
If $\omega\in \mathcal{B}_{kn}^k,$ define $\ty(\omega)$ to be the following family of partitions indexed by partitions of $k$
$$\ty(\omega):=(\omega(\rho))_{\rho\vdash k}.$$
\end{definition}

\begin{prop}\label{class_conj_classes}
Two permutations $\alpha$ and $\beta$ of $\mathcal{B}_{kn}^k$ are in the same conjugacy class if and only if they both have the same type.
\end{prop}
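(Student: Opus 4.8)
The plan is to prove both implications by tracking how conjugation in $\mathcal{B}_{kn}^k$ interacts with the block decomposition, and then to reduce the converse to producing an explicit conjugating element. Throughout I will use the clustering of the cycles of a permutation $\omega \in \mathcal{B}_{kn}^k$ described before the statement: the cycles of $\omega$ group into clusters, each supported on a union of $m$ $k$-tuples, and each cluster contributes the part $m$ to the partition $\omega(\rho)$, where $\rho \vdash k$ is the cycle-type of the permutation induced on one of its blocks after going once around the cluster.

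For the forward implication, I would start from $\beta = w\alpha w^{-1}$ with $w \in \mathcal{B}_{kn}^k$ and use the standard fact that conjugation carries each cycle $(a_1, a_2, \ldots)$ of $\alpha$ to the cycle $(w(a_1), w(a_2), \ldots)$ of $\beta$. Since $w$ sends $k$-tuples to $k$-tuples, one gets $p_\beta = p_w\, p_\alpha\, p_w^{-1}$, so $p_\alpha$ and $p_\beta$ have equal cycle-type; more precisely, $w$ carries each cluster of $\alpha$ supported on $m$ blocks onto a cluster of $\beta$ supported on the $m$ image blocks. Because $w$ restricts to a block-respecting bijection, both the number $m$ of blocks and the cycle-type $\rho$ of the induced return permutation are preserved (the two return permutations differ by conjugation in $\mathcal{S}_k$ by the relevant within-block part of $w$). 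Hence $\alpha(\rho) = \beta(\rho)$ for every $\rho \vdash k$, i.e. $\ty(\alpha) = \ty(\beta)$.

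For the converse I would argue that two elements of equal type are each conjugate to a common canonical representative, so it suffices to conjugate one onto the other cluster by cluster. Given $\ty(\alpha) = \ty(\beta)$, I would match the clusters of $\alpha$ with those of $\beta$: for each $\rho \vdash k$ and each part $m$ of the common partition $\alpha(\rho)=\beta(\rho)$ I pair off a cluster of $\alpha$ and a cluster of $\beta$ both realizing $(\rho,m)$. On a matched pair this yields a bijection between the $m$ underlying $k$-tuples, respecting the cyclic order in which $\alpha$ and $\beta$ visit them, together with a bijection of the $k$ elements of a chosen base block intertwining the two return permutations; the latter exists precisely because both return permutations have cycle-type $\rho$ and are therefore conjugate in $\mathcal{S}_k$. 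Assembling these partial bijections over all clusters defines a permutation $w$ that sends $k$-tuples to $k$-tuples, hence lies in $\mathcal{B}_{kn}^k$, and that satisfies $w\alpha w^{-1} = \beta$ on each cluster and so globally. The step I expect to be the main obstacle is exactly this construction on a single matched cluster: once the images of the elements of the base block are fixed so as to intertwine the return permutations, the relation $w\alpha = \beta w$ forces the images of all remaining elements, and one must verify that these forced choices are consistent (they close up correctly after the $m$ steps around the cluster) and genuinely send blocks to blocks. This consistency is guaranteed precisely by the equality of $m$ together with the matching of the cycle-types $\rho$, which is what makes the type a complete conjugacy invariant.
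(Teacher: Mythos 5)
Your proof is correct and follows essentially the same route as the paper's: the forward implication by noting that conjugation by a block-respecting permutation carries each cluster of cycles to a cluster with the same $(\rho,m)$ data, and the converse by matching clusters of equal type and building the conjugating element explicitly, propagating outward from a chosen base block around each cluster. The consistency check you single out (closing up after $m$ steps because the return permutations are intertwined) is precisely the point the paper leaves implicit in its ``orderly takes each element of the cycles of $\alpha$ to the element with the same order in $\beta$'' construction, so your write-up is, if anything, slightly more careful on that step.
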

\begin{proof}
Suppose $\alpha=\gamma\beta\gamma^{-1}$ for some $\gamma\in \mathcal{B}_{kn}^k$ and fix a partition $\rho$ of $k.$ Suppose that the cycles $\mathcal{C}_1,\cdots,\mathcal{C}_l$ of Equation (\ref{canonical_decom}) contribute to $\beta(\rho)$ then the cycles
\begin{equation*}
\mathcal{C}^{'}_1=(\gamma(a_1),\cdots,\gamma(a_2),\cdots,\gamma(a_{\rho_1}),\cdots),
\end{equation*}
$$
\mathcal{C}^{'}_2=(\gamma(a_{\rho_1+1}),\cdots,\gamma(a_{\rho_1+2}),\cdots,\gamma(a_{\rho_1+\rho_2}),\cdots),
$$
$$\vdots$$
$$
\mathcal{C}^{'}_l=(\gamma(a_{\rho_1+\cdots+\rho_{l-1}+1}),\cdots,\gamma(a_{\rho_1+\cdots+\rho_{l-1}+2}),\cdots,\gamma(a_{\rho_1+\cdots+\rho_{l-1}+\rho_l}),\cdots),
$$
will contribute to $\alpha(\rho)$ and they have respectively the same lengths as $\mathcal{C}_1,\cdots,\mathcal{C}_2.$ This proves the first implication.

Conversely, $\ty(\alpha)=\ty(\beta)$ means that $\alpha(\rho)=\beta(\rho)$ for any partition $\rho$ of $k.$  In order to simplify, we will look at the elements of two fixed $k$-tuples, say $p_k(1)=\lbrace 1,2,\cdots, k\rbrace$ for $\alpha$ and $p_k(2)=\lbrace k+1,k+2,\cdots ,2k\rbrace$ for $\beta,$ such that the distribution of the elements of $p_k(1)$ among the cycle decomposition in $\alpha$ is similar to that of the elements of $p_k(2)$ in $\beta.$ In other words, the cycle decompositions of $\alpha$ and $\beta$ are as follows:
$$
\alpha=(1,\cdots,2,\cdots,\rho_1,\cdots)(\rho_1+1,\cdots,\rho_1+2,\cdots,\rho_1+\rho_2,\cdots)\cdots$$
$$\cdots(\rho_1+\cdots+\rho_{l-1}+1,\cdots,\rho_1+\cdots+\rho_{l-1}+2,\cdots,\rho_1+\cdots+\rho_{l-1}+\rho_l,\cdots)$$
and
$$
\beta=(k+1,\cdots,k+2,\cdots,k+\rho_1,\cdots)(k+\rho_1+1,\cdots,k+\rho_1+2,\cdots,k+\rho_1+\rho_2,\cdots)\cdots$$
$$\cdots(k+\rho_1+\cdots+\rho_{l-1}+1,\cdots,k+\rho_1+\cdots+\rho_{l-1}+2,\cdots,k+\rho_1+\cdots+\rho_{l-1}+\rho_l,\cdots)
$$
Construct $\gamma$ to be the permutation that orderly takes each element of the above cycles of $\alpha$ to each element with the same order in $\beta,$ that is:
$$\gamma(1)=k+1, \gamma(\alpha(1))=\beta(k+1),\cdots \gamma(\rho_1)=k+\rho_1,\cdots$$
It is then clear that $\alpha=\gamma^{-1}\beta\gamma$ and $\gamma\in \mathcal{B}_{kn}^k.$
\end{proof}

\begin{prop}\label{Prop_size_conj}
Let $\omega$ be a permutation of $\mathcal{B}_{kn}^k$ then the size of the conjugacy class of $\omega$ is given by:
$$\frac{n!(k!)^n}{\displaystyle \prod_{\rho\vdash k}z_{\omega(\rho)}z_\rho^{l(\omega(\rho))}}.$$
\end{prop}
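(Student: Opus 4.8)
The plan is to apply the orbit--stabilizer theorem. By Proposition \ref{class_conj_classes} the conjugacy class of $\omega$ is precisely the set of permutations of type $\ty(\omega)$, so its cardinality equals $|\mathcal{B}_{kn}^k|/|C(\omega)|$, where $C(\omega)$ is the centralizer of $\omega$ in $\mathcal{B}_{kn}^k$. Since $|\mathcal{B}_{kn}^k|=(k!)^nn!$, the whole statement reduces to proving
\begin{equation*}
|C(\omega)|=\prod_{\rho\vdash k}z_{\omega(\rho)}\,z_\rho^{\,l(\omega(\rho))}.
\end{equation*}

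First I would isolate the \emph{components} of $\omega$. Each cycle of the block permutation $p_\omega$ of length $m$ uses $m$ of the $k$-tuples, and the $mk$ points lying in them form an $\omega$-invariant set on which, by the analysis leading to Equation~(\ref{canonical_decom}), $\omega$ restricts to $l(\rho)$ cycles of lengths $m\rho_1,\dots,m\rho_{l(\rho)}$, for the unique $\rho\vdash k$ to which this cycle of $p_\omega$ contributes the part $m$ of $\omega(\rho)$. These components partition $[kn]$; there are $\sum_{\rho}l(\omega(\rho))$ of them, and each carries a \emph{type} $(\rho,m)$. If $\gamma\in C(\omega)$ then, since $\omega\mapsto p_\omega$ is a homomorphism, $p_\gamma$ centralizes $p_\omega$ in $\mathcal{S}_n$, so $\gamma$ permutes the components; moreover it can only send a component of type $(\rho,m)$ to one of the same type, as conjugation by $\gamma$ preserves the cycle-structure of the relevant return map. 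Hence $C(\omega)$ factorises over the pairs $(\rho,m)$: for a fixed pair with $a=m_m(\omega(\rho))$ components of that type, the corresponding factor lets the $a$ components be permuted among themselves ($a!$ ways) and acts on each by its own local centralizer.

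The heart of the computation is the order of one local centralizer, which I claim is $m\,z_\rho$. Fix a component of type $(\rho,m)$ on blocks $p_k(1),\dots,p_k(m)$ with $\omega$ sending $p_k(i)$ to $p_k(i+1)$ cyclically, so that the return permutation $\pi=\omega^m$ restricted to $p_k(1)$ has cycle-type $\rho$. An element $\delta$ of the local centralizer respects blocks, so $p_\delta$ commutes with the $m$-cycle $(1\,2\,\cdots\,m)$ and is one of its $m$ powers; for each such choice the only remaining freedom is one fibre-bijection, which must lie in the centralizer $Z_{\mathcal{S}_k}(\pi)$ of size $z_\rho$, while the remaining fibre-bijections are forced by $\delta\omega=\omega\delta$. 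This yields $m\,z_\rho$, and collecting all factors gives
\begin{equation*}
|C(\omega)|=\prod_{\rho\vdash k}\ \prod_{m\geq 1}\bigl(m\,z_\rho\bigr)^{m_m(\omega(\rho))}\,m_m(\omega(\rho))!=\prod_{\rho\vdash k}z_{\omega(\rho)}\,z_\rho^{\,l(\omega(\rho))},
\end{equation*}
the last equality being the regrouping $\prod_m m^{m_m(\omega(\rho))}m_m(\omega(\rho))!=z_{\omega(\rho)}$ together with $\prod_m z_\rho^{\,m_m(\omega(\rho))}=z_\rho^{\,l(\omega(\rho))}$.

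I expect the main obstacle to be the two structural reductions rather than the arithmetic: verifying rigorously that a centralizing $\gamma$ genuinely preserves the decomposition into components and cannot mix different types, and—inside a single component—checking that fixing $p_\delta$ together with the restriction of $\delta$ to one block forces $\delta$ everywhere, with the only residual freedom being a centralizer of $\pi$ in $\mathcal{S}_k$. Both points come down to tracking how the relation $\delta\omega=\omega\delta$ interacts with the block structure of $\mathcal{B}_{kn}^k$, exactly in the spirit of the explicit conjugation used in Proposition \ref{class_conj_classes}.
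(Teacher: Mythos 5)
Your proof is correct, but it takes a genuinely different route from the paper. The paper counts the conjugacy class directly: it enumerates all $\gamma$ with $\ty(\gamma)=\ty(\omega)$ by choosing the block permutation $p_\gamma$ with the prescribed cycle type ($n!/z_{\lambda}$ choices), then distributing its cycles of each length among the partitions $\gamma(\rho)$ (a multinomial factor), and finally counting the ways to fill in the permutation along each cycle of blocks ($k!/z_{\rho}$ for one $k$-tuple and a factor $k!$ for each remaining tuple). You instead invoke orbit--stabilizer and compute the centralizer, reducing everything to the local claim $|C_{\mathrm{loc}}|=m\,z_\rho$ for one component of type $(\rho,m)$, plus the wreath-type factor $a!\,(m z_\rho)^a$ when $a$ components of equal type may be permuted. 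Your local claim checks out: once $p_\delta$ is forced to be a power $(1\,2\cdots m)^j$, the relation $\delta\omega=\omega\delta$ propagates $\delta|_{p_k(1)}$ around the cycle of blocks, and the closing-up condition after $m$ steps is that $\omega^{-j}\circ\delta|_{p_k(1)}$ commutes with the return map $\pi$; so the admissible fibre-bijections form a torsor over $Z_{\mathcal{S}_k}(\pi)$ rather than literally lying in it (a harmless imprecision in your write-up), giving $z_\rho$ choices for each of the $m$ values of $j$, and similarly the maps between distinct components of equal type form a torsor over the local centralizer, which is nonempty by the conjugating construction of Proposition \ref{class_conj_classes}. What your approach buys is structural information --- the centralizer is exhibited explicitly, which explains a priori why the class size is $|\mathcal{B}_{kn}^k|$ divided by a product of this clean form; what the paper's direct count buys is brevity, since it never needs the two reductions you flag (that a centralizing element permutes components and cannot mix types).
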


\begin{proof} Suppose that $\ct(p_\omega)=\lambda,$ where $\lambda$ is a partition of $n.$ We will explain how to obtain all the elements $\gamma\in \mathcal{B}_{kn}^k$ that have the same type as $\omega.$ First of all, $p_\gamma$ must be decomposed in cycles with sizes equivalent to those of $p_\omega.$ In other words, $p_\gamma$ should have the same cycle-type as $p_\omega$ and there are 
$$\frac{n!}{z_{\lambda}}$$
choices to make $p_\gamma.$ 

Now suppose that $m_r(\lambda)\neq 0$ for some $r\in [n].$ That means that there are exactly $m_r(\lambda)$ cycles of length $r$ distributed in the family $(\omega(\rho))_{\rho\vdash k}.$ Suppose they all appear in the partitions $\omega(\rho_1),\cdots, \omega(\rho_m).$ In order to have $\ty(\gamma)=\ty(\omega),$ we should make $\gamma(\rho_1)=\omega(\rho_1),$ $\gamma(\rho_2)=\omega(\rho_2)$ and so on. That is $\gamma(\rho_1)$ should have parts equal to $r$ as many as $\omega(\rho_1)$ has and so on. The total number of $\gamma$ verifying these conditions is
$$\frac{m_r(\lambda)!}{\displaystyle \prod_{1\leqslant j\leqslant m}m_r(\omega(\rho_j))!}=\frac{m_r(\lambda)!}{\displaystyle \prod_{\rho\vdash k}{m_r(\omega(\rho))!}}$$

It remains to see how many $\gamma\in \mathcal{B}_{kn}^k$ one can make when he nows all $\gamma(\rho).$ For this fix a partition $\rho_1$ of $k$ such that $\gamma(\rho_1)\neq \emptyset$ and suppose $m_1$ is a part of $\gamma(\rho_1).$ There will be $m_1$ $k$-tuples involved in the construction of $\gamma$ now. Choose one of them then distribute its elements according to $\rho_1$ in $\frac{k!}{z_{\rho_1}}$ ways. To complete $\gamma,$ there will be $(k!)^{m_1-1}$ choices for the elements between any two consecutive elements of the fixed $k$-tuples in any chosen cycle.

By bringing together all the above arguments, the size of the conjugacy class of $\omega$ is:

$$\frac{n!}{z_{\lambda}}\displaystyle \prod_{r,m_r(\lambda)\neq 0} \frac{m_r(\lambda)!}{\displaystyle \prod_{\rho\vdash k}{m_r(\omega(\rho))!}}\displaystyle \frac{(k!)^{n}}{z_{\rho}^{l(\omega(\rho))}}=\frac{n!(k!)^n}{\displaystyle \prod_{\rho\vdash k}z_{\omega(\rho)}z_\rho^{l(\omega(\rho))}}.$$
This equality is due to the fact that 
$$l(\lambda)=\sum_{r\geq 1}m_r(\lambda)=\sum_{r\geq 1}\sum_{\rho\vdash k}m_r(\omega(\rho)).$$
\end{proof}

We turn now to show that the group $\mathcal{B}_{kn}^k$ is isomorphic to the wreath product $\mathcal{S}_k \sim \mathcal{S}_n.$ The wreath product $\mathcal{S}_k \sim \mathcal{S}_n$ is the group with underlying set $\mathcal{S}_k^n\times \mathcal{S}_n$ and product defined as follows:
$$((\sigma_1,\sigma_2,\cdots ,\sigma_n); p).((\epsilon_1,\epsilon_2,\cdots ,\epsilon_n); q)=((\sigma_1\epsilon_{p^{-1}(1)},\sigma_2\epsilon_{p^{-1}(2)},\cdots ,\sigma_n\epsilon_{p^{-1}(n)});pq),$$
for any $((\sigma_1,\sigma_2,\cdots ,\sigma_n); p),((\epsilon_1,\epsilon_2,\cdots ,\epsilon_n); q)\in \mathcal{S}_k^n\times \mathcal{S}_n.$ The identity in this group is $(1;1):=((1_k,1_k,\cdots ,1_k); 1_n),$ where $1_i$ denotes the identity function of $\mathcal{S}_i.$ The inverse of an element $((\sigma_1,\sigma_2,\cdots ,\sigma_n); p)\in \mathcal{S}_k \sim \mathcal{S}_n$ is given by $$((\sigma_1,\sigma_2,\cdots ,\sigma_n); p)^{-1}:=((\sigma^{-1}_{p(1)},\sigma^{-1}_{p(2)},\cdots ,\sigma^{-1}_{p(n)}); p^{-1}).$$

For each permutation $\omega\in \mathcal{B}^k_{kn}$ and each integer $i\in [n],$ define $\omega_i$ to be the normalized restriction of $\omega$ on the block $p_\omega^{-1}(i).$ That is:

$$\begin{array}{ccccc}
\omega_i & : &[k] & \to & [k] \\
& & b & \mapsto &  \omega_i(b):=\omega\big(k(p_\omega^{-1}(i)-1)+b\big)\%k,\\
\end{array}$$
where $\%$ means that the integer is taken modulo $k$ (for a multiple of $k$ we use $k$ instead of $0$).

\begin{ex}\label{ex_psi} 
Consider the following permutation $\alpha$ of $\mathcal{B}^3_{18}:$ 
$$
\alpha=\begin{pmatrix}
1&2&3&4&5&6&7&8&9&10&11&12&13&14&15&16&17&18\\
12&10&11&5&6&4&8&7&9&15&13&14&16&18&17&3&2&1
\end{pmatrix}.
$$
The blocks permutation associated to $\alpha$ is $p_\alpha=(1,4,5,6)(2)(3).$ In addition, we have:
$$\alpha_1=(1,3),~~\alpha_2=(1,2,3),~~\alpha_3=(1,2),~~\alpha_4=(1,3,2),~~\alpha_5=(1,3,2) \text{ and }\alpha_6=(2,3).$$
\end{ex}

\begin{prop}\label{isom}
The application 
$$\begin{array}{ccccc}
\psi & : &\mathcal{B}^k_{kn} & \to & \mathcal{S}_k\sim \mathcal{S}_n \\
& & \omega & \mapsto &  \psi(\omega):=((\omega_1,\cdots ,\omega_n);p_\omega),\\
\end{array}$$
is a group isomorphism.
\end{prop}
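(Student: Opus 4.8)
The plan is to establish in turn that $\psi$ is well defined, that it is a group homomorphism, and that it is a bijection, using the composition convention $\omega\omega'(x)=\omega(\omega'(x))$ throughout.

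First I would check that $\psi$ genuinely lands in $\mathcal{S}_k\sim\mathcal{S}_n$. Since $\omega\in\mathcal{B}^k_{kn}$, the block permutation $p_\omega$ is a bona fide element of $\mathcal{S}_n$, and $\omega$ restricts to a bijection from the block $p_\omega^{-1}(i)$ onto the block $i$. Composing this restriction with the order-preserving identifications of each block with $[k]$, which is exactly the effect of the reduction $\%\,k$, shows that each $\omega_i$ is a bijection $[k]\to[k]$, i.e. $\omega_i\in\mathcal{S}_k$. Hence $\psi(\omega)=((\omega_1,\ldots,\omega_n);p_\omega)$ is a legitimate element of the wreath product.

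The heart of the argument is the homomorphism property. Writing $\eta=\omega\omega'$, I would first observe that block permutations compose: from $\omega'(p_k(i))=p_k(p_{\omega'}(i))$ and $\omega(p_k(j))=p_k(p_\omega(j))$ one gets $\eta(p_k(i))=p_k(p_\omega(p_{\omega'}(i)))$, so $p_\eta=p_\omega p_{\omega'}$, matching the second coordinate $pq$ of the product formula. For the first coordinate I would trace a single element through the composition. Fix $i$, set $m=p_\omega^{-1}(i)$, and note $j:=p_\eta^{-1}(i)=p_{\omega'}^{-1}(m)$ because $p_\eta^{-1}=p_{\omega'}^{-1}p_\omega^{-1}$. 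For $b\in[k]$, the definition of $\omega'_m$ gives $\omega'(k(j-1)+b)=k(m-1)+\omega'_m(b)$ (the image lies in $p_k(m)$ since $p_{\omega'}(j)=m$), and then the definition of $\omega_i$ gives $\omega(k(m-1)+\omega'_m(b))=k(i-1)+\omega_i(\omega'_m(b))$ (the image lies in $p_k(i)$ since $p_\omega(m)=i$). Reducing modulo $k$ yields $\eta_i(b)=\omega_i(\omega'_m(b))$, that is $\eta_i=\omega_i\circ\omega'_{p_\omega^{-1}(i)}$, which is precisely the $i$-th coordinate $\omega_i\,\omega'_{p^{-1}(i)}$ of the wreath-product multiplication. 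Therefore $\psi(\omega\omega')=\psi(\omega)\psi(\omega')$.

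Finally I would deduce bijectivity. It suffices to prove injectivity, since the orders agree: $|\mathcal{B}^k_{kn}|=(k!)^n n!=|\mathcal{S}_k|^n\,|\mathcal{S}_n|=|\mathcal{S}_k\sim\mathcal{S}_n|$. For a trivial kernel, if $\psi(\omega)=((1_k,\ldots,1_k);1_n)$ then $p_\omega=1_n$, so $p_\omega^{-1}(i)=i$ and $\omega_i(b)=\omega(k(i-1)+b)\,\%\,k=b$ forces $\omega(k(i-1)+b)=k(i-1)+b$ for all $i$ and $b$, whence $\omega=\id$. An injective homomorphism between finite groups of equal order is an isomorphism. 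The main obstacle is the index bookkeeping in the homomorphism step: one must keep the three relevant blocks straight (the source $p_\eta^{-1}(i)$, the intermediate $m=p_\omega^{-1}(i)$, and the target $i$) together with the two modular reductions, and confirm that the index shift in the inner factor $\omega'_{p_\omega^{-1}(i)}$ is exactly what the convention $q\mapsto\epsilon_{p^{-1}(i)}$ demands; the rest is routine.
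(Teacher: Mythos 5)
Your proof is correct, and it takes a different route from the paper's. You verify multiplicativity of $\psi$ itself by chasing a single element through the three relevant blocks (source $p_\eta^{-1}(i)$, intermediate $p_\omega^{-1}(i)$, target $i$), and then obtain bijectivity from the trivial kernel together with the order count $|\mathcal{B}^k_{kn}|=(k!)^n n!=|\mathcal{S}_k\sim\mathcal{S}_n|$. The paper instead writes down the explicit inverse map $\phi:\mathcal{S}_k\sim\mathcal{S}_n\to\mathcal{B}^k_{kn}$ defined by $\sigma\big(k(a-1)+b\big)=k(p(a)-1)+\sigma_{p(a)}(b)$, asserts that $\psi$ is a bijection with this inverse, and then checks that $\phi$ (rather than $\psi$) is a homomorphism; since a multiplicative bijection has a multiplicative inverse, this also proves the proposition. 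Each approach has its advantage: the paper's check is algebraically lighter because $\phi$ comes with a closed formula, so $\phi(x.y)=\phi(x)\phi(y)$ is essentially a one-line substitution once both sides are evaluated at $k(a-1)+b$, whereas your computation of $\eta_i=\omega_i\circ\omega'_{p_\omega^{-1}(i)}$ requires more careful index bookkeeping; on the other hand, your argument never has to construct or trust an inverse map, and the kernel-plus-counting step makes the bijectivity claim fully explicit, which the paper leaves at the level of ``clearly a bijection.'' Your identification of the inner index $\omega'_{p_\omega^{-1}(i)}$ is exactly right and matches the wreath-product convention $\sigma_i\epsilon_{p^{-1}(i)}$ used in the paper, so there is no gap.
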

\begin{proof}
$\psi$ is clearly a bijection with inverse given by:

$$\begin{array}{ccccc}
\phi & : &\mathcal{S}_k\sim \mathcal{S}_n & \to & \mathcal{B}^k_{kn} \\
& & ((\sigma_1,\sigma_2,\cdots ,\sigma_n); p) & \mapsto &  \sigma,\\
\end{array}$$
where $\sigma\big(k(a-1)+b\big)=k(p(a)-1)+\sigma_{p(a)}(b)$ for any $a\in [n]$ and any $b\in [k].$ It remains to show that if $x=((\sigma_1,\sigma_2,\cdots ,\sigma_n); p)$ and $y=((\epsilon_1,\epsilon_2,\cdots ,\epsilon_n); q)$ are two elements of $\mathcal{S}_k\sim \mathcal{S}_n$ then $\phi(x.y)=\phi(x)\phi(y).$ To prove this let $a\in [n]$ and $b\in [k].$ On the right hand we have:
$$\phi(x)\phi(y)\big(k(a-1)+b\big)=\phi(x)\big(k(q(a)-1)+\epsilon_{q(a)}(b)\big)=k(p(q(a))-1)+\sigma_{p(q(a))}(\epsilon_{q(a)}(b))$$
and on the left hand we have:
$$\phi(xy)\big(k(a-1)+b\big)=k((pq)(a)-1)+\sigma_{(pq)(a)}\epsilon_{p^{-1}((pq)(a))}(b).$$ This shows that both functions $\phi(x.y)$ and $\phi(x)\phi(y)$ are equal which finishes the proof.
\end{proof}

\begin{ex} Recall the permutation $\alpha\in \mathcal{B}^3_{18}$ of Example \ref{ex_psi} and consider the following permutation $\beta$ of the same group: 
$$
\beta=\begin{pmatrix}
1&2&3&4&5&6&7&8&9&10&11&12&13&14&15&16&17&18\\
4&5&6&18&17&16&8&9&7&1&2&3&12&11&10&15&14&13
\end{pmatrix}.
$$
We have:
$$ 
\alpha\beta=\begin{pmatrix}
1&2&3&4&5&6&7&8&9&10&11&12&13&14&15&16&17&18\\
5&6&4&1&2&3&7&9&8&12&10&11&14&13&15&17&18&16
\end{pmatrix},
$$
$$\psi(\alpha)=\Big(\big( (1,3),(1,2,3),(1,2),(1,3,2),(1,3,2),(2,3)\big);(1,4,5,6)(2)(3)\Big),$$
$$\psi(\beta)=\Big(\big( 1,1,(1,2,3),(1,3),(1,3),(1,3)\big);(1,2,6,5,4)(3)\Big)$$
and 
$$\psi(\alpha\beta)=\Big(\big( 1,(1,2,3),(2,3),(1,3,2),(1,2),(1,2,3)\big);(1,2)(3)(4)(5)(6)\Big).$$
Now we can easily verify that $\psi(\alpha\beta)=\psi(\alpha).\psi(\beta).$

\end{ex}

\section{Special cases} 
In this section we will treat two special cases, the case of $k=2$ and that of $k=3.$ We will see that when $k=2,$ the group $\mathcal{B}_{2n}^2$ is the hyperoctahedral group on $2n$ elements. There are many papers in the literature, see \cite{stembridge1992projective} and \cite{geissinger1978representations} for examples, that study the representations of the hyperoctahedral group. Especially we are interested here in studying its conjugacy classes. We will recover some of its nice properties using our approach.

\subsection{Case $k=2:$ The hyperoctahedral group}\label{sec_hyp}

When $k=2$ there are only two types of partitions of $2,$ mainly $\lambda_1=(1^2)$ and $\lambda_2=(2).$ Now we are going to see how things go in the context of $\mathcal{B}^2_{2n}.$ We use the same arguments given in \cite[Section 6.2]{Tout2017}. If $a\in p_2(i)$, we shall denote by $\overline{a}$ the element of the set $p_2(i)\setminus \lbrace a\rbrace.$ Therefore, we have, $\overline{\overline{a}}=a$ for any $a\in [2n].$ As seen in our general construction, the cycle decomposition of a permutation of $\mathcal{B}^2_{2n}$ will have two types of cycles. To see this, suppose that $\omega$ is a permutation of $\mathcal{B}^2_{2n}$ and take the following cycle $\mathcal{C}$ of its decomposition:
$$\mathcal{C}=(a_1,\cdots ,a_{l}).$$
We distinguish two cases :
\begin{enumerate}
\item $\overline{a_1}$ appears in the cycle $\mathcal{C},$ for example $a_j=\overline{a_1}.$ Since $\omega\in\mathcal{B}^2_{2n}$ and $\omega(a_1)=a_2,$ we have $\omega(\overline{a_1})=\overline{a_2}=\omega(a_j).$ Likewise, since $\omega(a_{j-1})=\overline{a_1},$ we have $\omega(\overline{a_{j-1}})=a_1$ which means that $a_{l}=\overline{a_{j-1}}.$ Therefore,
$$\mathcal{C}=(a_1,\cdots a_{j-1},\overline{a_1},\cdots,\overline{a_{j-1}})$$
and $l=2(j-1)$ is even. We will denote such a cycle by $(\mathcal{O},\overline{\mathcal{O}}).$
\item $\overline{a_1}$ does not appear in the cycle $\mathcal{C}.$ Take the cycle $\overline{\mathcal{C}}$ which contains $\overline{a_1}.$ Since $\omega(a_1)=a_2$ and $\omega\in \mathcal{B}^2_{2n}$, we have $\omega(\overline{a_1})=\overline{a_2}$ and so on. That means that the cycle $\overline{\mathcal{C}}$ has the following form,
$$\overline{\mathcal{C}}=(\overline{a_1},\overline{a_2},\cdots ,\overline{a_{l}})$$
and that $\mathcal{C}$ and $\overline{\mathcal{C}}$ appear in the cycle decomposition of $\omega.$
\end{enumerate} 

The cycles of the first case will contribute to $\omega(\lambda_2)$ while those of the second will contribute to $\omega(\lambda_1).$ Suppose now that the cycle decomposition of a permutation $\omega$ of $\mathcal{B}^2_{2n}$ is as follows:
$$\omega=\mathcal{C}_1\overline{\mathcal{C}_1}\mathcal{C}_2\overline{\mathcal{C}_2}\cdots \mathcal{C}_k\overline{\mathcal{C}_k}(\mathcal{O}^1,\overline{\mathcal{O}^1})(\mathcal{O}^2,\overline{\mathcal{O}^2})\cdots (\mathcal{O}^l,\overline{\mathcal{O}^l}),$$
where the cycles $\mathcal{C}_i$ (resp. $(\mathcal{O}^j,\overline{\mathcal{O}^j})$) are written decreasingly according to their sizes.
From this decomposition, we obtain that the parts of the partition $\omega(\lambda_1)$ are the sizes of the sets $\mathcal{C}_j,$ while the parts of the partition $\omega(\lambda_2)$ are the sizes of the sets $\mathcal{O}^i:$
$$\omega(\lambda_1)=(|\mathcal{C}_1|,\cdots,|\mathcal{C}_k|),~~\omega(\lambda_2)=(|\mathcal{O}^1|,\cdots,|\mathcal{O}^l|) \text{ and }|\omega(\lambda_1)|+|\omega(\lambda_2)|=n.$$

\begin{ex} Consider the following permutation 
$$\omega=\begin{pmatrix}
1&2&&3&4&&5&6&&7&8&&9&10&&11&12&&13&14&&15&16\\
14&13&&1&2&&16&15&&7&8&&12&11&&10&9&&4&3&&5&6
\end{pmatrix}\in \mathcal{B}^2_{16}.$$ Its decomposition into product of disjoint cycles is as follows:
$$\omega=(1,14,3)(2,13,4)(7)(8)(9,12)(10,11)(5,16,6,15).$$
Then $\omega(\lambda_1)=(3,2,1)$ and $\omega(\lambda_2)=(2).$ 
\end{ex}

Apply Proposition \ref{Prop_size_conj} to obtain the following result.

\begin{cor}\label{size_conj_k=2}
The size of the conjugacy class of a permutation $\omega\in \mathcal{B}^2_{2n}$ is:
$$\frac{2^nn!}{2^{l(\omega(\lambda_1))+l(\omega(\lambda_2))}z_{\omega(\lambda_1)}z_{\omega(\lambda_2)}}.$$
\end{cor}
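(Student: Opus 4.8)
The plan is to specialize the general size formula of Proposition \ref{Prop_size_conj} to the case $k=2$. The key structural observation is that there are exactly two partitions of $2$, namely $\lambda_1=(1^2)$ and $\lambda_2=(2)$, so the product $\prod_{\rho\vdash k}$ in the denominator of that formula collapses into just two factors, one indexed by $\lambda_1$ and one by $\lambda_2$.

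First I would record the two numerical inputs needed. Since $k=2$, the prefactor $(k!)^n$ in the numerator becomes $2^n$. Next, using the definition $z_\rho=\prod_i i^{m_i(\rho)}m_i(\rho)!$, I would compute the two constants that occur as exponential bases in the denominator: for $\rho=\lambda_1=(1^2)$ we get $z_{\lambda_1}=1^2\cdot 2!=2$, and for $\rho=\lambda_2=(2)$ we get $z_{\lambda_2}=2^1\cdot 1!=2$. Thus both $z_{\lambda_1}$ and $z_{\lambda_2}$ equal $2$, which is precisely the coincidence that lets the two powers of $2$ be merged into a single exponent in the final expression.

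Then I would substitute these values directly into Proposition \ref{Prop_size_conj}. The denominator becomes
$$z_{\omega(\lambda_1)}z_{\lambda_1}^{l(\omega(\lambda_1))}\,z_{\omega(\lambda_2)}z_{\lambda_2}^{l(\omega(\lambda_2))}=2^{l(\omega(\lambda_1))}\,2^{l(\omega(\lambda_2))}\,z_{\omega(\lambda_1)}z_{\omega(\lambda_2)},$$
and collecting the two powers of $2$ gives the combined exponent $l(\omega(\lambda_1))+l(\omega(\lambda_2))$. Pairing this with the numerator $n!(k!)^n=2^n n!$ yields exactly the claimed formula.

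There is no genuine obstacle here beyond the routine bookkeeping of the $z_\rho$ values, since the corollary is an immediate specialization of Proposition \ref{Prop_size_conj}. The only point deserving a moment's care is the verification that both relevant centralizer-type constants $z_{\lambda_1}$ and $z_{\lambda_2}$ equal $2$, as it is this equality that permits writing the result with a single consolidated power of $2$ rather than two separate factors.
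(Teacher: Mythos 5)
Your proof is correct and follows exactly the paper's approach: the paper obtains this corollary by directly applying Proposition \ref{Prop_size_conj} with $k=2$, which is precisely your specialization. Your explicit computation of $z_{(1^2)}=z_{(2)}=2$ is the only bookkeeping needed, and you carried it out correctly.
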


The above Corollary \ref{size_conj_k=2} is a well known formula for the sizes of the conjugacy classes of the hyperoctahedral group, see \cite{stembridge1992projective} and \cite{geissinger1978representations}.

\subsection{Case $k=3$}\label{seck=3} In a way similar to that of case $k=2,$ the fact that there are only three partitions of $3$ suggests that there are three types of cycles in the decomposition into product of disjoint cycles of a permutation $\omega\in \mathcal{B}^3_{3n}.$

Let $\mathcal{C}$ be a cycle of $\omega\in \mathcal{B}^3_{3n},$ we distinguish the following three cases:
\begin{enumerate}
\item first case: all three elements of a certain $p_3(s)$ belong to $\mathcal{C}.$ For simplicity, suppose:
$$\mathcal{C}=(a_1=1,a_2,a_3,\cdots, a_j=2,a_{j+1},\cdots, a_l=3,a_{l+1},\cdots a_k).$$
Since $\omega\in \mathcal{B}^3_{3n},$ the sets $\lbrace a_2,a_{j+1},a_{l+1}\rbrace,$ $\lbrace a_3,a_{j+2},a_{l+2}\rbrace,\cdots,$ $\lbrace a_{j-1},a_{l-1},a_k\rbrace$ all have the form $p_3(m)$ and thus $\mathcal{C}$ is a cycle of length $3(j-1)$ that contains a union of sets of the form $p_3(r).$
\item second case:  two and only two elements of a certain $p_3(s)$ belong to $\mathcal{C}.$ Say,
$$\mathcal{C}=(a_1=1,a_2,a_3,\cdots, a_j=2,a_{j+1},\cdots , a_k).$$
Since $\omega\in \mathcal{B}^3_{3n},$ there exists integers $b_i,$ $1\leq i\leq j-1,$ such that 
$$\lbrace a_1,a_{j}\rbrace=p_3(b_1)\setminus \lbrace c_1\rbrace,$$
$$\lbrace a_2,a_{j+1}\rbrace=p_3(b_2)\setminus \lbrace c_2\rbrace,$$
$$\vdots $$
$$\lbrace a_{j-1},a_k\rbrace=p_3(b_{j-1})\setminus \lbrace c_{j-1}\rbrace,$$
and another cycle $(c_1,c_2,\cdots ,c_{j-1})$ should thus appear in the decomposition of $\omega.$
\item third case:  all three elements of a certain $p_3(s)$ belong to different cycles. In this case, all three cycles will have the same lengths and each one of them will contain elements belonging to different triplets.
\end{enumerate} 

Now for any permutation $\omega\in \mathcal{B}^3_{3n},$ define $\gamma_\omega$ to be the partition obtained from the lengths divided by three of the cycles of the first case, $\beta_\omega$ the partition obtained from the lengths of the cycles of the second case divided by two and $\alpha_\omega$ the partition obtained from the lengths of the cycles of the third case. It would be clear that 
$$\omega(\lambda_1)=\alpha_\omega,~~ \omega(\lambda_2)=\beta_\omega,~~ \omega(\lambda_3)=\gamma_\omega \text{ and } |\gamma_\omega|+|\beta_\omega|+|\alpha_\omega|=n.$$ 
For example, for the permutation $\omega$ of Example \ref{k=3,equivex}, we have:
$$\gamma_\omega=(2,2),~~\beta_\omega=(2,1) \text{ and } \alpha_\omega=(1).$$

Using Proposition \ref{Prop_size_conj}, we obtain the following result.

\begin{cor}\label{size_con_k=3} The size of the conjugacy class of $\omega\in \mathcal{B}_{3n}^3$ is:

$$\frac{(3!)^nn!}{(3!)^{l(\alpha_\omega)}z_{\alpha_\omega}2^{l(\beta_\omega)}z_{\beta_\omega}3^{l(\gamma_\omega)}z_{\gamma_\omega}}=2^{n-l(\alpha_\omega)-l(\beta_\omega)}3^{n-l(\alpha_\omega)-l(\gamma_\omega)}.\frac{n!}{z_{\alpha_\omega}z_{\beta_\omega}z_{\gamma_\omega}}.$$

\end{cor}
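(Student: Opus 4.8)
The plan is to apply Proposition \ref{Prop_size_conj} directly with $k=3$ and then simplify the resulting expression by elementary arithmetic. First I would record that there are exactly three partitions of $3$, namely $\lambda_1=(1^3)$, $\lambda_2=(2,1)$ and $\lambda_3=(3)$, and compute the quantities $z_\rho$ attached to each of them from the definition $z_\lambda:=1^{m_1(\lambda)}m_1(\lambda)!2^{m_2(\lambda)}m_2(\lambda)!\cdots$. This gives $z_{\lambda_1}=1^3\cdot 3!=3!$, $z_{\lambda_2}=2^1\cdot 1!\cdot 1^1\cdot 1!=2$ and $z_{\lambda_3}=3^1\cdot 1!=3$. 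These are the only three inputs needed from the side of the groups $\mathcal{S}_3$.

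Next I would substitute into the general formula
$$\frac{n!(k!)^n}{\displaystyle \prod_{\rho\vdash k}z_{\omega(\rho)}z_\rho^{l(\omega(\rho))}}$$
of Proposition \ref{Prop_size_conj} with $k=3$. Expanding the product over the three partitions and using the identifications $\omega(\lambda_1)=\alpha_\omega$, $\omega(\lambda_2)=\beta_\omega$, $\omega(\lambda_3)=\gamma_\omega$ established just before the statement, the denominator becomes
$$z_{\alpha_\omega}(3!)^{l(\alpha_\omega)}\cdot z_{\beta_\omega}2^{l(\beta_\omega)}\cdot z_{\gamma_\omega}3^{l(\gamma_\omega)},$$
which yields at once the first displayed expression in the corollary.

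To obtain the second equality I would factor $3!=2\cdot 3$, so that $(3!)^n=2^n3^n$ in the numerator and $(3!)^{l(\alpha_\omega)}=2^{l(\alpha_\omega)}3^{l(\alpha_\omega)}$ in the denominator, and then separately collect the powers of $2$ and of $3$. Grouping the prime $2$ gives the exponent $n-l(\alpha_\omega)-l(\beta_\omega)$ and grouping the prime $3$ gives the exponent $n-l(\alpha_\omega)-l(\gamma_\omega)$, leaving the factor $\tfrac{n!}{z_{\alpha_\omega}z_{\beta_\omega}z_{\gamma_\omega}}$ intact. The whole argument is purely a specialization followed by bookkeeping of prime powers; there is no genuine obstacle, the only point requiring a little care being to keep the three partitions paired correctly with their $z_\rho$-values so that no factor of $2$ or $3$ is misplaced when the two exponents are read off.
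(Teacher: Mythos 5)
Your proposal is correct and follows exactly the paper's route: the paper also obtains this corollary as a direct specialization of Proposition \ref{Prop_size_conj} to $k=3$, using the identifications $\omega(\lambda_1)=\alpha_\omega$, $\omega(\lambda_2)=\beta_\omega$, $\omega(\lambda_3)=\gamma_\omega$ made just before the statement. Your computation of $z_{(1^3)}=3!$, $z_{(2,1)}=2$, $z_{(3)}=3$ and the subsequent bookkeeping of the powers of $2$ and $3$ are exactly the details the paper leaves implicit.
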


\section{The center of the group $\mathcal{B}_{kn}^k$ algebra}\label{sec_6}

In this section, we present in Theorem \ref{main_the} a polynomiality property for the structure coefficients of the center of the group $\mathcal{B}_{kn}^k$ algebra. This can be seen as a generalisation of the Farahat and Higman result in \cite{FaharatHigman1959} and our result in \cite{Tout2017} that gave polynomiality properties for the structure coefficients of the center of the symmetric group and the hyperoctahedral group algebras respectively. A special treatment for the cases $k=2$ and $k=3$ is given.
\subsection{The algebra $Z(\mathbb{C}[\mathcal{B}_{kn}^k])$} 
The center of the group $\mathcal{B}_{kn}^k$ algebra will be denoted $Z(\mathbb{C}[\mathcal{B}_{kn}^k]).$ It is the algebra over $\mathbb{C}$ spanned by the "formal sum of elements of the" conjugacy classes of $\mathcal{B}_{kn}^k.$ According to Proposition \ref{class_conj_classes}, these are indexed by families of partitions $x=(x(\lambda))_{\lambda\vdash k}$ satisfying the property:
\begin{equation}\label{co}
|x|:=\sum_{\lambda\vdash k}|x(\lambda)|=n
\end{equation}
and for each such a family its associated conjugacy class $C_{x}$ is
$$C_{x}=\lbrace t\in \mathcal{B}_{kn}^k \text{ such that }\ty(t)=x\rbrace$$
while its formal sum of elements is
$$\mathbf{C}_x:=\sum_{t\in C_x}t.$$
From now on and unless stated otherwise, $x$ is a family of partition would mean $x=(x(\lambda))_{\lambda\vdash k}$ with the condition \ref{co}.

Let $x$ and $y$ be two family of partitions. In the algebra $Z(\mathbb{C}[\mathcal{B}_{kn}^k]),$ the product $\mathbf{C}_x\mathbf{C}_y$ can be written as a linear combination as following 
\begin{equation}\label{struc_coef}
\mathbf{C}_x\mathbf{C}_y=\sum_{z}c_{xy}^z \mathbf{C}_z
\end{equation}
where $z$ runs through all the families of partitions. The coefficients $c_{xy}^z$ that appear in this equation are called the structure coefficients of the center of the group $\mathcal{B}_{kn}^k$ algebra. 

\subsection{Polynomiality of the structure coefficients of $Z(\mathbb{C}[\mathcal{B}_{kn}^k])$} When $k=1,$ Farahat and Higman were the first to give a polynomiality property for the structure coefficients of the center of the symmetric group algebra in \cite{FaharatHigman1959}. In the case of the center of the hyperoctahedral group algebra, we gave a polynomiality property for its structure coefficients in \cite{Tout2017}. The goal of this section is to generalize these results and show that the structure coefficients of $Z(\mathbb{C}[\mathcal{B}_{kn}^k])$ have a polynomiality property for any fixed $k.$ \\

The most natural way to see a permutation $\omega\in \mathcal{B}_{kn}^k$ as an element of $\omega\in \mathcal{B}_{k(n+1)}^k$ is by extending it by identity. By doing so, the new permutation will have the same type as $\omega$ except that the partition $\omega(1^k)$ will become $\omega(1^k)\cup (1).$

\begin{definition}\label{def_propre}
A family of partitions $x=(x(\lambda))_{\lambda\vdash k}$ is said to be proper if and only if the partition $x(1^k)$ is proper. If $x=(x(\lambda))_{\lambda\vdash k}$ is a proper family of partitions such that $|x|<n,$
we define $C_{x}(n)$ to be the set of elements $t\in \mathcal{B}_{kn}^k$ that have type equals to $x$ except that $x(1^k)$ is replaced by $x(1^k)\cup (1^{n-|x|}).$
\end{definition}

If $x=(x(\lambda))_{\lambda\vdash k}$ is a proper family of partitions such that $|x|=n_0$ then for any $n>n_0$ we have by Proposition \ref{Prop_size_conj} the following result:

\begin{eqnarray*}
|C_x(n)|&=&\frac{n!(k!)^n}{z_{x(1^k)\cup (1^{n-n_0})}(k!)^{l(x(1^k))+n-n_0}\displaystyle \prod_{\lambda\vdash k, \lambda\neq (1^k)}z_{x(\lambda)}z_\lambda^{l(x(\lambda))}}\\
&=&\frac{n!(k!)^{n_0-l(x(1^k))}}{z_{x(1^k)}(n-n_0)!\displaystyle \prod_{\lambda\vdash k, \lambda\neq (1^k)}z_{x(\lambda)}z_\lambda^{l(x(\lambda))}}.
\end{eqnarray*}

Take $x$ and $y$ to be two proper families of partitions. For any integer $n>|x|,|y|$ we have the following equation in $Z(\mathbb{C}[\mathcal{B}_{kn}^k])$
\begin{equation}\label{eq_str_coe}
\mathbf{C}_x(n)\mathbf{C}_y(n)=\sum_{z}c_{xy}^h(n) \mathbf{C}_z(n),
\end{equation}
where $h$ runs through all proper families of partitions verifying $|z|\leq n.$

In \cite{Tout2017}, under some conditions, a formula describing the form of the structure coefficients of centers of finite group algebras is given. We show below that the sequence $(\mathcal{B}_{kn}^k)_n$ satisfies these conditions. This will allow us to use \cite[Corollary 6.3]{Tout2017} in order to give a polynomiality property for the structure coefficients $c_{xy}^h(n)$ described in Equation (\ref{eq_str_coe}). 

We will show below the conditions required in \cite{Tout2017} for our sequence of groups $(\mathcal{B}_{kn}^k)_n.$ To avoid repetitions and confusing notations and since the integer $k$ will be fixed, we will use simply $G_n$ to denote the group $\mathcal{B}_{kn}^k.$

\textbf{Hypothesis 1:} For any integer $1\leq r\leq n,$ there exists a group $G_n^r$ isomorphic to $G_{n-r}.$ Set 
$$G_n^r:=\lbrace \omega\in G_n \text{ such that } \omega(i)=i \text{ for any $1\leq i\leq kr$}\rbrace.$$
for this reason.

\textbf{Hypothesis 2:} The elements of $G_n^r$ and $G_r$ commute between each other which is normal since the permutations in these groups act on disjoint sets.

\textbf{Hypothesis 3:} $G_{n+1}^r\cap G_n=G_n^r$ which is obvious.

\textbf{Hypothesis 4:} For any $z\in G_n,$ $\mathrm{k}(G_n^{r_1} zG_n^{r_2}):=\min\lbrace s|G_n^{r_1} zG_n^{r_2}\cap G_s\neq \emptyset\rbrace\leq r_1+r_2.$ To prove this, remark first that the size of the set $\lbrace 1,\cdots,kr_1\rbrace \cap \lbrace z(1),\cdots,z(kr_2)\rbrace$ is a multiple of $k$ since $z\in G_n,$ say it is $km.$ Suppose that $$\lbrace h_1,\cdots, h_{kr_1-km}\rbrace =\lbrace1,\cdots,kr_1\rbrace\setminus \lbrace z(1),\cdots,z(kr_2)\rbrace.$$ We can find a permutation of the following form
$$\begin{matrix}
1 & 2 & \cdots & kr_2 & kr_2+1 & \cdots & kr_1+kr_2-km & kr_1+kr_2-km+1 & \cdots & kn \\
z(1) & z(2) & \cdots & z(kr_2) & h_1 & \cdots & h_{kr_1-km} & * & \cdots & *
\end{matrix}$$
in $zG_n^{r_2}$ since it contains permutations that fixes the first $kr_2$ images of $z.$ The stars are used to say that the images may not be fixed. Since the multiplication by an element of $G_n^{r_1}$ to the left permutes the elements greater than $kr_1$ in the second line defining this permutation, the set $G_n^{r_1} zG_n^{r_2}$ contains thus a permutation of the following form
$$\begin{matrix}
1 & 2 & \cdots & kr_2 & kr_2+1 & \cdots & kr_1+kr_2-km & kr_1+kr_2-km+1 & \cdots & kn \\
* & * & \cdots & * & h_1 & \cdots & h_{kr_1-km} & kr_1+kr_2-km+1 & \cdots & kn
\end{matrix}$$
This permutation is also in $G_{r_1+r_2-m}$ which ends the proof. 

\textbf{Hypothesis 5:} If $z\in G_n$ then we have $zG_n^{r_1}z^{-1}\cap G_n^{r_2}= G_n^{r(z)}$ where 
\begin{eqnarray*}
r(z)&=&|\lbrace z(1),z(2),\cdots , z(kr_1),1,\cdots ,kr_2\rbrace| \\
&=&kr_1+kr_2-|\lbrace z(1),z(2),\cdots , z(kr_1)\rbrace\cap\lbrace 1,\cdots ,kr_2\rbrace|.
\end{eqnarray*}
To prove this, let $a=zbz^{-1}$ be an element of $G_n$ which fixes the $kr_2$ first elements while $b$ fixes the $kr_1$ first elements. Then $a$ also fixes the elements $z(1),\cdots, z(kr_1)$ which proves that $zG_n^{r_1}z^{-1}\cap G_n^{r_2}\subset G_n^{r(z)}.$ In the opposite direction, if $p$ is a permutation of $n$ which fixes the elements of the set $\lbrace z(1),z(2),\cdots , z(kr_1),1,\cdots ,kr_2\rbrace$ then $p$ is in $G_n^{r_2}$ and in addition $z^{-1}pz$ is in $G_n^{r_1}$ which implies that $p=zz^{-1}pzz^{-1}$ is in $zG_n^{r_1}z^{-1}.$\\

Now with all the necessary hypotheses verified we can apply the main result in \cite{Tout2017} to get the following theorem.

\begin{theoreme}\label{main_the}
Let $x,$ $y$ and $h$ be three proper families of partitions. For any $n>|x|,|y|,|h|,$ the coefficients $c_{xy}^h(n)$ defined in Equation \ref{eq_str_coe} are polynomials in $n$ with non-negative rational coefficients. In addition,
$$\deg(c_{xy}^h(n))<|x|+|y|-|h|.$$ 
\end{theoreme}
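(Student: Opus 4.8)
The plan is to invoke the general machinery the author has already set up in \cite{Tout2017}. The entire subsection preceding this theorem is devoted to verifying Hypotheses 1 through 5 for the sequence of groups $G_n=\mathcal{B}_{kn}^k$; these are precisely the standing assumptions of \cite[Corollary 6.3]{Tout2017}. So the proof should not re-derive anything about wreath products directly but instead observe that all hypotheses hold and quote the external corollary verbatim, translating its conclusion into the present notation.

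The key steps, in order, are as follows. First I would state explicitly that since Hypotheses 1--5 have been established for $(\mathcal{B}_{kn}^k)_n$ in the paragraphs above, the hypotheses of \cite[Corollary 6.3]{Tout2017} are satisfied, so that corollary applies to the structure coefficients $c_{xy}^h(n)$ of Equation \eqref{eq_str_coe}. Second, I would record the two conclusions of that corollary: polynomiality in $n$ together with the degree bound. The polynomiality (and the fact that the coefficients of the resulting polynomial are non-negative rationals) is a direct transcription of the corollary's output under the dictionary $G_n=\mathcal{B}_{kn}^k$, where a ``proper family of partitions'' here plays the role of the proper/reduced index in the abstract framework --- the condition that $x(1^k)$ be proper is exactly what makes the indexing stable under the extension-by-identity map $\mathcal{B}_{kn}^k\hookrightarrow\mathcal{B}_{k(n+1)}^k$ described just before Definition \ref{def_propre}.

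The only genuinely substantive point is the degree bound $\deg(c_{xy}^h(n))<|x|+|y|-|h|$. In the abstract framework the bound is phrased in terms of the filtration parameter $\mathrm{k}(\cdot)$ and the sizes of the classes, so I would need to check that the abstract ``size'' or ``weight'' attached to a proper family $x$ coincides with $|x|=\sum_{\lambda\vdash k}|x(\lambda)|$ as defined in Equation \eqref{co}. This identification is forced by Hypothesis 4, where I showed $\mathrm{k}(G_n^{r_1}zG_n^{r_2})\leq r_1+r_2$ with the block-counting argument: the relevant weight of a class is the minimal $r$ such that the class meets $G_r$, and for a proper family this minimum is exactly $|x|$. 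Thus the abstract bound $\deg<\text{(weight of }x)+\text{(weight of }y)-\text{(weight of }h)$ becomes the stated inequality.

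I expect the main obstacle to be purely bookkeeping rather than mathematical: making sure the normalization conventions of \cite{Tout2017} (how a class is indexed, how its weight is measured, and whether the degree bound there is strict or weak) match the conventions fixed in this paper, so that the translation is faithful. No new estimate is required --- once the dictionary between the two notations is pinned down, the theorem is an immediate corollary, and I would keep the written proof to a few lines that cite \cite[Corollary 6.3]{Tout2017} and note that Hypotheses 1--5 were verified above.
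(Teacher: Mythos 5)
Your overall route---verify Hypotheses 1--5 for $(\mathcal{B}_{kn}^k)_n$ and then invoke \cite[Corollary 6.3]{Tout2017}---is exactly the paper's, but there is a genuine gap in what you assume that corollary delivers. It does not assert polynomiality; it asserts a formula,
$$c_{xy}^h(n)=\frac{|C_x(n)||C_y(n)||\mathcal{B}_{k(n-|x|)}^k||\mathcal{B}_{k(n-|y|)}^k|}{|\mathcal{B}_{kn}^k||C_h(n)|}\sum_{|h|\leq r \leq |x|+|y|}\frac{a_{xy}^h(r)}{|\mathcal{B}_{k(n-r)}^k|},$$
where the $a_{xy}^h(r)$ are positive rationals independent of $n$. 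Polynomiality in $n$, the non-negativity of the coefficients, and the degree bound cannot be ``transcribed'' from this; they must be extracted by substituting the explicit cardinalities computed in the paper precisely for this purpose---namely $|\mathcal{B}_{kn}^k|=(k!)^nn!$ and the expression for $|C_x(n)|$ obtained from Proposition \ref{Prop_size_conj} just after Definition \ref{def_propre}---and then simplifying. After that substitution the whole $n$-dependence collapses into the falling factorials $(n-|h|)!/(n-r)!$, each a polynomial in $n$ of degree $r-|h|$, multiplied by an $n$-independent positive constant; that explicit expression is what the conclusions are read from. Your proposal explicitly states that ``no new estimate is required'' and reduces everything to matching conventions, so as written it never produces the polynomial, its coefficients, or its degree.

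Relatedly, your explanation of the degree bound is misattributed: it does not come from identifying the abstract weight with $|x|$ via Hypothesis 4 and the filtration parameter $\mathrm{k}(\cdot)$, but from the range of the summation index, $|h|\leq r\leq |x|+|y|$, combined with $\deg\bigl((n-|h|)!/(n-r)!\bigr)=r-|h|$. The fix is concrete and short: after citing the corollary, plug in the class sizes and group orders, simplify as the paper does to get
$$c_{xy}^h(n)=(k!)^{l(h(1^k))-l(x(1^k))-l(y(1^k))}\frac{z_{h(1^k)}c_h}{z_{x(1^k)}z_{y(1^k)}c_xc_y}\sum_{|h|\leq r \leq |x|+|y|}\frac{(k!)^{r-|h|}a_{xy}^h(r)(n-|h|)!}{(n-r)!}$$
with $c_x=\prod_{\lambda\vdash k, \lambda\neq (1^k)}z_{x(\lambda)}z_\lambda^{l(x(\lambda))}$, and read off the theorem from this formula.
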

\begin{proof}
By \cite[Corollary 6.3]{Tout2017}, if $n>|x|,|y|,|h|$ then 
\begin{equation*}
c_{xy}^h(n)=\frac{|C_x(n)||C_y(n)||\mathcal{B}_{k(n-|x|)}^k||\mathcal{B}_{k(n-|y|)}^k|}{|\mathcal{B}_{kn}^k||C_h(n)|}\sum_{|h|\leq r \leq |x|+|y|}\frac{a_{xy}^h(r)}{|\mathcal{B}_{k(n-r)}^k|}
\end{equation*}
where the $a_{xy}^h(r)$ are positive, rational and independent numbers of $n.$ Since all the cardinals involved in this formula are known, we get after simplification the following formula for $c_{xy}^h(n)$
\begin{equation*}
c_{xy}^h(n)=(k!)^{l(h(1^k))-l(x(1^k))-l(y(1^k))}\frac{z_{h(1^k)}c_h}{z_{x(1^k)}z_{y(1^k)}c_xc_y}\sum_{|h|\leq r \leq |x|+|y|}\frac{(k!)^{r-|h|}a_{xy}^h(r)(n-|h|)!}{(n-r)!}
\end{equation*}
where $c_x$ denotes $\prod_{\lambda\vdash k, \lambda\neq (1^k)}z_{x(\lambda)}z_\lambda^{l(x(\lambda))}.$ The result follows.
\end{proof}

\subsection{Special cases} In this section we revisit the two already known results of polynomiality for the structure coefficients of the center of the symmetric group ($k=1$) algebra and the center of the hyperoctahedral ($k=2$) group algebra. In addition, as an application of our main theorem, we give a polynomiality property in the case $k=3.$ In these three cases, we give explicit expressions of products of conjugacy classes in the associated center algebra in order to see our results.  

\subsubsection{k=1: The symmetric group} As seen in Section \ref{sec_2}, the conjugacy classes of the symmetric group $\mathcal{S}_n$ are indexed by partitions of $n.$ If $\lambda$ is a partition of $n$ the size of its associated conjugacy class is 
$$|C_\lambda|=\frac{n!}{z_\lambda}.$$

If $\lambda$ is a proper partition with $|\lambda|<n,$ we define $\underline{\lambda}_n$ to be the partition $\lambda\cup (1^{n-|\lambda|}).$ Now let $\lambda$ and $\delta$ be two proper partitions with $|\lambda|,|\delta|<n.$ In the center of the symmetric group algebra we have the following equation:
\begin{equation}\label{Eq_str_sym}
\textbf{C}_{\underline{\lambda}_n}\textbf{C}_{\underline{\delta}_n}=\sum_{\gamma}c_{\lambda\delta}^\gamma (n)\textbf{C}_{\underline{\gamma}_n}
\end{equation}
where the sum runs through all proper partitions $\gamma$ satisfying $|\gamma|\leq |\lambda|+|\delta|.$ If we apply Theorem \ref{main_the}, we re-obtain the following result of Farahat and Higman in \cite[Theorem 2.2]{FaharatHigman1959}.

\begin{theoreme}
Let $\lambda,$ $\delta$ and $\gamma$ be three proper partitions and let $n\geq |\lambda|,|\delta|,|\gamma|$ be an integer. The structure coefficient $c_{\lambda\delta}^{\gamma}(n)$ of the center of the symmetric group algebra defined by Equation (\ref{Eq_str_sym}) is a polynomial in $n$ with non-negative coefficients and 
$$\deg(c_{\lambda\delta}^{\gamma}(n))\leq |\lambda|+|\delta|-|\gamma|.$$
\end{theoreme}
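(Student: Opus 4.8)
The plan is to derive this Farahat--Higman theorem as a special case of Theorem \ref{main_the} by setting $k=1$. The key observation is that when $k=1$, the group $\mathcal{B}_n^1$ is precisely the symmetric group $\mathcal{S}_n$, and there is only one partition of $1$, namely $(1)$. Consequently a family of partitions $x=(x(\lambda))_{\lambda\vdash 1}$ collapses to a single partition $x((1))$, which I will identify with an ordinary partition. Under this identification, the notion of a proper family of partitions (Definition \ref{def_propre}) reduces to the requirement that the single partition $x((1))$ be proper in the usual sense, and the extension-by-identity construction $C_x(n)$ reduces exactly to forming $\underline{\lambda}_n=\lambda\cup(1^{n-|\lambda|})$.

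First I would make this dictionary precise: given proper partitions $\lambda,\delta,\gamma$, set $x=(\lambda)$, $y=(\delta)$, $h=(\gamma)$ as families indexed by $\{(1)\}$. Then $|x|=|\lambda|$, $|y|=|\delta|$, $|h|=|\gamma|$, and the product equation \eqref{eq_str_coe} becomes exactly \eqref{Eq_str_sym}, so that $c_{xy}^h(n)=c_{\lambda\delta}^\gamma(n)$. Next I would simply invoke Theorem \ref{main_the}: for $n>|\lambda|,|\delta|,|\gamma|$ the coefficient is a polynomial in $n$ with non-negative rational coefficients, and its degree is strictly less than $|\lambda|+|\delta|-|\gamma|$. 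Since the statement here only claims degree at most $|\lambda|+|\delta|-|\gamma|$ (and allows the boundary case $n\geq|\lambda|,|\delta|,|\gamma|$), the conclusion of Theorem \ref{main_the} is in fact slightly stronger on the open range, and I would note that the weaker bound stated follows immediately.

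The one point requiring a small remark, rather than a genuine obstacle, is the coefficient field: Theorem \ref{main_the} guarantees non-negative \emph{rational} coefficients, whereas the Farahat--Higman statement asserts non-negative coefficients. These are consistent, and I would either state the conclusion as non-negative rational coefficients or observe that integrality is a separate classical fact not needed for the polynomiality-and-degree assertion being recovered here. The main thing to verify carefully is that the general formula for $c_{xy}^h(n)$ from the proof of Theorem \ref{main_the} specializes correctly when $k=1$: there $(k!)=1$ so all the factors $(k!)^{\cdots}$ become $1$, the term $c_x=\prod_{\lambda\vdash 1,\lambda\neq(1)}(\cdots)$ is an empty product equal to $1$, and the formula reduces to $c_{\lambda\delta}^\gamma(n)=\frac{z_\gamma}{z_\lambda z_\delta}\sum_{|\gamma|\leq r\leq|\lambda|+|\delta|}\frac{a_{\lambda\delta}^\gamma(r)(n-|\gamma|)!}{(n-r)!}$, which is manifestly a polynomial of the claimed degree. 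I expect no serious difficulty; the entire proof is an instance of Theorem \ref{main_the}, and the work is confined to checking that the $k=1$ specialization of all the notation matches the classical setup.
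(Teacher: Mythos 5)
Your proposal is correct and takes essentially the same route as the paper: the paper also obtains this theorem purely by specializing Theorem \ref{main_the} to $k=1$, identifying $\mathcal{B}_n^1$ with $\mathcal{S}_n$ and families of partitions indexed by the unique partition of $1$ with ordinary partitions, so that Equation (\ref{eq_str_coe}) becomes Equation (\ref{Eq_str_sym}). Your additional remarks (rational versus unspecified non-negative coefficients, and the strict bound $\deg < |\lambda|+|\delta|-|\gamma|$ implying the stated $\leq$ bound) only make explicit details the paper leaves implicit.
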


\begin{ex}
See \cite{touPhd14} for more information about the computation of the following two complete expressions in the center of the symmetric group $\mathcal{S}_n,$
$$\mathbf{C}_{(1^{n-2},2)}\mathbf{C}_{(1^{n-2},2)}=\frac{n(n-1)}{2} \mathbf{C}_{(1^{n})}+3\mathbf{C}_{(1^{n-3},3)}+2\mathbf{C}_{(1^{n-4},2^2)} \text{ for any $n\geq 4$},$$
and 
$$\mathbf{C}_{(1^{n-2},2)}\mathbf{C}_{(1^{n-3},3)}=2(n-2) \mathbf{C}_{(1^{n-2},2)}+4\mathbf{C}_{(1^{n-4},4)}+\mathbf{C}_{(1^{n-5},2,3)} \text{ for any $n\geq 5$}.$$
\end{ex}

\subsubsection{k=2: The hyperoctahedral group} In Section \ref{sec_hyp}, we showed that the conjugacy classes of the hyperoctahedral group are indexed by pairs of partitions $(\lambda,\delta)$ such that $|\lambda|+|\delta|=n.$ The partition $\lambda$ is that associated to the partition $(1,1)$ of $2$ while $\delta$ is associated to the partition $(2).$ The size of the class $C_{(\lambda,\delta)}$ is given in Corollary \ref{size_conj_k=2}

$$|C_{(\lambda,\delta)}|=\frac{2^nn!}{2^{l(\lambda)+l(\delta)}z_{\lambda}z_{\delta}}.$$

By Definition \ref{def_propre}, the pair $(\lambda,\delta)$ is proper if and only if the partition $\lambda$ is proper. For a proper pair $(\lambda,\delta)$ of partitions and for any integer $n>|\lambda|+|\delta|,$ we define the following pair of partitions:
$$\underline{(\lambda,\delta)}_n:=(\lambda\cup (1^{n-|\lambda|-|\delta|}),\delta).$$ Let $(\lambda_1,\delta_1)$ and $(\lambda_2,\delta_2)$ be two proper pairs of partitions. We have the following equation in the center of the hyperoctahedral group algebra for any integer $n$ greater than $|\lambda_1|+|\delta_1|, |\lambda_2|+|\delta_2|,$

\begin{equation}\label{Eq_str_hyp}
\textbf{C}_{\underline{(\lambda_1,\delta_1)}_n}\textbf{C}_{\underline{(\lambda_2,\delta_2)}_n}=\sum_{(\lambda_3,\delta_3)}c_{(\lambda_1,\delta_1)(\lambda_2,\delta_2)}^{(\lambda_3,\delta_3)} (n)\textbf{C}_{\underline{(\lambda_3,\delta_3)}_n}
\end{equation}
where the sum runs over all the proper pairs of partitions $(\lambda_3,\delta_3)$ satisfying $|\lambda_3|+|\delta_3|\leq |\lambda_1|+|\delta_1|+\lambda_2|+|\delta_2|.$ As an application of Theorem \ref{main_the}, we re-obtain the following result in \cite[Corollary 6.11]{Tout2017}.

\begin{cor}
Let $(\lambda_1,\delta_1), (\lambda_2,\delta_2)$ and $(\lambda_3,\delta_3)$ be three proper pairs of partitions, then for any $n\geq |\lambda_1|+|\delta_1|,|\lambda_2|+|\delta_2|,|\lambda_3|+|\delta_3|$ the structure coefficient $c_{(\lambda_1,\delta_1)(\lambda_2,\delta_2)}^{(\lambda_3,\delta_3)}(n)$ of the center of the hyperoctahedral group algebra defined in Equation (\ref{Eq_str_hyp}) is a polynomial in $n$ with non-negative coefficients and we have
$$\deg(c_{(\lambda_1,\delta_1)(\lambda_2,\delta_2)}^{(\lambda_3,\delta_3)}(n))\leq |\lambda_1|+|\delta_1|+|\lambda_2|+|\delta_2|-|\lambda_3|-|\delta_3|.$$
\end{cor}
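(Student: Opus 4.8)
The plan is to deduce this statement directly from Theorem~\ref{main_the} by specializing to $k=2$. First I would record the key notational identification: when $k=2$ the set of partitions of $k$ consists of exactly the two partitions $(1^2)$ and $(2)$, so a family of partitions $x=(x(\lambda))_{\lambda\vdash 2}$ in the sense of Theorem~\ref{main_the} is nothing but a pair $(\lambda,\delta)$, where $\lambda=x(1^2)$ is the partition attached to $(1^2)$ and $\delta=x(2)$ is the one attached to $(2)$. Under this dictionary one has $|x|=|x(1^2)|+|x(2)|=|\lambda|+|\delta|$, and the notion of a proper family (the partition $x(1^k)$ is proper, Definition~\ref{def_propre}) coincides with the requirement that $\lambda$ be proper, which is exactly the convention fixed before Equation~(\ref{Eq_str_hyp}).

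Next I would check that the two product expansions agree. After the above identification of pairs with families, Equation~(\ref{Eq_str_hyp}) is literally Equation~(\ref{eq_str_coe}) for $k=2$: the completion $\underline{(\lambda,\delta)}_n=(\lambda\cup(1^{n-|\lambda|-|\delta|}),\delta)$ matches the prescription in Definition~\ref{def_propre} of padding $x(1^k)$ with parts equal to $1$, and the conjugacy-class sums $\mathbf{C}_{\underline{(\lambda,\delta)}_n}$ coincide with the $\mathbf{C}_x(n)$. Consequently the structure coefficients $c_{(\lambda_1,\delta_1)(\lambda_2,\delta_2)}^{(\lambda_3,\delta_3)}(n)$ are exactly the coefficients $c_{xy}^h(n)$ of Theorem~\ref{main_the} with $x=(\lambda_1,\delta_1)$, $y=(\lambda_2,\delta_2)$ and $h=(\lambda_3,\delta_3)$.

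It then remains only to transcribe the conclusion of Theorem~\ref{main_the}. The polynomiality in $n$ and the non-negativity of the coefficients are inherited verbatim. For the degree bound, Theorem~\ref{main_the} gives the strict inequality
$$\deg\big(c_{(\lambda_1,\delta_1)(\lambda_2,\delta_2)}^{(\lambda_3,\delta_3)}(n)\big)<|x|+|y|-|h|=|\lambda_1|+|\delta_1|+|\lambda_2|+|\delta_2|-|\lambda_3|-|\delta_3|,$$
which a fortiori yields the non-strict bound asserted in the corollary. There is essentially no genuine obstacle here: the only point requiring attention is the careful matching of the two notational systems (pairs of partitions versus families indexed by $\vdash k$), together with the observation that the strict degree bound of the general theorem is stronger than, and hence implies, the $\leq$ bound stated for the hyperoctahedral case.
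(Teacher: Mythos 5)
Your proposal is correct and follows exactly the paper's route: the paper states this corollary as a direct application of Theorem~\ref{main_the} specialized to $k=2$, which is precisely your argument, with the identification of pairs $(\lambda,\delta)$ with families indexed by the two partitions $(1^2)$ and $(2)$ and the observation that the strict degree bound of the theorem implies the stated $\leq$ bound. Your write-up simply makes explicit the notational dictionary that the paper leaves implicit.
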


\begin{ex}
We give in this example the complete product of the class $C_{((1^{n-2}),(2))}$ by itself whenever $n\geq 4:$
$$\mathbf{C}_{((1^{n-2}),(2))}\mathbf{C}_{((1^{n-2}),(2))}=n(n-1)\mathbf{C}_{((1^{n}),\emptyset)}+2\mathbf{C}_{((1^{n-4}),(2^2))}+2\mathbf{C}_{((1^{n-2}),(1^2))}+3\mathbf{C}_{((1^{n-3},3),\emptyset)}.$$

${C}_{((1^{n}),\emptyset)}$ is the identity class and since any element in ${C}_{((1^{n-2}),(2))}$ has its inverse in ${C}_{((1^{n-2}),(2))},$ the coefficient of $\mathbf{C}_{((1^{n}),\emptyset)}$ is the size of the conjugacy class ${C}_{((1^{n-2}),(2))}$ which is $n(n-1).$ The coefficient of $\mathbf{C}_{((1^{n-4}),(2^2))}$ is $2$ since if we fix a permutation of ${C}_{((1^{n-4}),(2^2))},$ say $$(1,3,2,4)(5,7,6,8)(9)(10)\cdots (2n),$$ 
then there exists only two pairs $(\alpha;\beta)\in {C}_{((1^{n-2}),(2))}\times {C}_{((1^{n-2}),(2))}$ such that $$\alpha\beta=(1,3,2,4)(5,7,6,8)(9)(10)\cdots (2n).$$ Mainly:
$$(\alpha,\beta)=((1,3,2,4)(5)\cdots (2n);(1)(2)(3)(4)(5,7,6,8)(9)\cdots (2n))$$
or 
$$(\alpha,\beta)=((1)(2)(3)(4)(5,7,6,8)(9)\cdots (2n);(1,3,2,4)(5)\cdots (2n)).$$ 
There are only two permutations in $\mathcal{B}_4^2,$ namely $\alpha=(1,3,2,4)$ and $\beta=(1,4,2,3),$ such that $\alpha,\beta\in {C}_{(\emptyset,(2))}$ and $\alpha^2=\beta^2=(12)(34).$ Thus the coefficient of $\mathbf{C}_{((1^{n-2}),(1^2))}$ is $2.$ The last coefficient can be obtained by identifying both sides.
\end{ex}

\subsubsection{k=3: the group $\mathcal{B}_{3k}^3$} Moving to the case $k=3,$ we showed in Section \ref{seck=3} that the conjugacy classes of the group $\mathcal{B}_{3n}^3$ are indexed by triplet of partitions $(\alpha,\beta,\gamma)$ such that $|\alpha|+|\beta|+|\gamma|=n.$ For a given triplet $(\alpha,\beta,\gamma),$ the size of its associated conjugacy class is given in Corollary \ref{size_con_k=3},

$$|C_{(\alpha,\beta,\gamma)}|=2^{n-l(\alpha)-l(\beta)}3^{n-l(\alpha)-l(\gamma)}.\frac{n!}{z_{\alpha}z_{\beta}z_{\gamma}}.$$

By Definition \ref{def_propre}, the triplet $(\alpha,\beta,\gamma)$ is proper if and only if the partition $\alpha$ is proper. Fix three proper triplet of partitions, $(\alpha_1,\beta_1,\gamma_1),$ $(\alpha_2,\beta_2,\gamma_2)$ and $(\alpha_3,\beta_3,\gamma_3),$ the structure coefficient associated to these three triplet has the following form according to the proof of Theorem \ref{main_the}, 

$$
c_{(\alpha_1,\beta_1,\gamma_1)(\alpha_2,\beta_2,\gamma_2)}^{(\alpha_3,\beta_3,\gamma_3)}(n)=(3!)^{l(\alpha_3)-l(\alpha_1)-l(\alpha_2)}\frac{z_{\alpha_3}z_{\beta_3}2^{l(\beta_3)}z_{\gamma_3}3^{l(\gamma_3)}}{z_{\alpha_1}z_{\alpha_2}z_{\beta_1}2^{l(\beta_1)}z_{\gamma_1}3^{l(\gamma_1)}z_{\beta_2}2^{l(\beta_2)}z_{\gamma_2}3^{l(\gamma_2)}}\times$$
$$\sum_{r}\frac{(3!)^{r-|\alpha_3|-|\beta_3|-|\gamma_3|}a_{(\alpha_1,\beta_1,\gamma_1)(\alpha_2,\beta_2,\gamma_2)}^{(\alpha_3,\beta_3,\gamma_3)}(r)(n-|\alpha_3|-|\beta_3|-|\gamma_3|)!}{(n-r)!}
$$
where the sum runs through all integers $r$ with $$|\alpha_3|+|\beta_3|+|\gamma_3|\leq r \leq |\alpha_1|+|\beta_1|+|\gamma_1|+|\alpha_2|+|\beta_2|+|\gamma_2|.$$

\begin{cor}
Let $(\alpha_1,\beta_1,\gamma_1),$ $(\alpha_2,\beta_2,\gamma_2)$ and $(\alpha_3,\beta_3,\gamma_3)$ be three proper triplet of partitions, then for any $n\geq |\alpha_1|+|\beta_1|+|\gamma_1|,|\alpha_2|+|\beta_2|+|\gamma_2|,|\alpha_3|+|\beta_3|+|\gamma_3|$ the structure coefficient $c_{(\alpha_1,\beta_1,\gamma_1)(\alpha_2,\beta_2,\gamma_2)}^{(\alpha_3,\beta_3,\gamma_3)}(n)$ of the center of the group $\mathcal{B}_{3n}^3$ algebra is a polynomial in $n$ with non-negative coefficients and we have
$$\deg(c_{(\alpha_1,\beta_1,\gamma_1)(\alpha_2,\beta_2,\gamma_2)}^{(\alpha_3,\beta_3,\gamma_3)}(n))\leq |\alpha_1|+|\beta_1|+|\gamma_1|+|\alpha_2|+|\beta_2|+|\gamma_2|-|\alpha_3|-|\beta_3|-|\gamma_3|.$$
\end{cor}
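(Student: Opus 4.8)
The plan is to recognize this corollary as nothing more than the specialization of Theorem \ref{main_the} to the value $k=3$, and then to read the degree bound directly off the explicit closed form for the structure coefficient that has just been displayed. First I would set up the dictionary between the two notations. By Proposition \ref{class_conj_classes} the conjugacy classes of $\mathcal{B}_{3n}^3$ are indexed by families of partitions $x=(x(\rho))_{\rho\vdash 3}$; since there are exactly three partitions of $3$, namely $(1^3)$, $(2,1)$ and $(3)$, such a family is precisely a triplet $(\alpha,\beta,\gamma)=(x((1^3)),x((2,1)),x((3)))$, which is exactly the indexing fixed in Section \ref{seck=3} through $\omega(\lambda_1)=\alpha_\omega$, $\omega(\lambda_2)=\beta_\omega$, $\omega(\lambda_3)=\gamma_\omega$. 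Under this dictionary the properness condition of Definition \ref{def_propre}, that $x((1^3))$ be proper, becomes precisely the requirement that $\alpha$ be proper, matching the hypothesis of the corollary, and one has $|x|=|\alpha|+|\beta|+|\gamma|$.

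Next I would simply invoke Theorem \ref{main_the}. Taking $x=(\alpha_1,\beta_1,\gamma_1)$, $y=(\alpha_2,\beta_2,\gamma_2)$ and $h=(\alpha_3,\beta_3,\gamma_3)$, the coefficient $c_{(\alpha_1,\beta_1,\gamma_1)(\alpha_2,\beta_2,\gamma_2)}^{(\alpha_3,\beta_3,\gamma_3)}(n)$ is literally the coefficient $c_{xy}^h(n)$ of Equation (\ref{eq_str_coe}), because the sequence $(\mathcal{B}_{3n}^3)_n$ satisfies Hypotheses 1--5 already verified for the general sequence $(\mathcal{B}_{kn}^k)_n$ in Section \ref{sec_6}. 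Theorem \ref{main_the} then asserts at once that $c_{xy}^h(n)$ is a polynomial in $n$ with non-negative rational coefficients, so the first two assertions of the corollary require no further work.

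For the degree I would use the explicit expression displayed immediately above the statement. All the prefactors appearing there are constants independent of $n$, so the $n$-dependence lives entirely in the sum over $r$. Each summand carries the factor $(n-|\alpha_3|-|\beta_3|-|\gamma_3|)!\,/\,(n-r)!$, which, writing $|h|=|\alpha_3|+|\beta_3|+|\gamma_3|$, equals $(n-r+1)(n-r+2)\cdots(n-|h|)$, a polynomial in $n$ of degree exactly $r-|h|$. Since $r$ ranges over $|h|\leq r\leq |\alpha_1|+|\beta_1|+|\gamma_1|+|\alpha_2|+|\beta_2|+|\gamma_2|=|x|+|y|$, the largest degree attained is $|x|+|y|-|h|$, which is precisely the claimed bound.

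I expect no genuine obstacle: the substance of the result is the hypothesis-checking already carried out for general $k$, after which everything is bookkeeping. The only points demanding a moment of care are the translation between the family-of-partitions language of Theorem \ref{main_the} and the triplet language used here, and the observation that the non-strict bound $\leq$ stated in the corollary is exactly what the explicit formula produces (and is in any case weaker than the strict inequality of Theorem \ref{main_the}).
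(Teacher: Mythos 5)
Your proposal is correct and follows essentially the same route as the paper: the corollary is obtained by specializing Theorem \ref{main_the} (and the explicit formula for $c_{xy}^h(n)$ from its proof, displayed just above the corollary for $k=3$) via the dictionary between triplets of partitions and families indexed by the three partitions of $3$, with the degree bound read off the factors $(n-|h|)!/(n-r)!$ exactly as you describe. Your remark that the stated $\leq$ bound is what the explicit sum yields (and is weaker than the theorem's strict inequality) also matches the paper's presentation.
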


\begin{ex} For $n\geq 3,$ we leave it to the reader to verify the following two complete expressions in $Z(\mathbb{C}[\mathcal{B}_{3n}^3]):$

$$\mathbf{C}_{((1^{n-2}),(1),(1))}\mathbf{C}_{((1^{n-1}),\emptyset,(1))}=2\mathbf{C}_{((1^{n-3}),(1),(1^2))}+2(n-1)\mathbf{C}_{((1^{n-1}),(1),\emptyset)}+3\mathbf{C}_{((1^{n-2}),(1),(1))}$$
and
$$\mathbf{C}_{((1^{n-2}),(1),(1))}\mathbf{C}_{((1^{n-1}),(1),\emptyset)}=2\mathbf{C}_{((1^{n-3}),(1^2),(1))}+3(n-1)\mathbf{C}_{((1^{n-1}),\emptyset,(1))}+4\mathbf{C}_{((1^{n-2}),(1^2),\emptyset)}$$$$+6\mathbf{C}_{((1^{n-2}),\emptyset,(1^2))}.$$

\end{ex}

\bibliographystyle{plain}
\bibliography{biblio}

\begin{thebibliography}{10}

\bibitem{FaharatHigman1959}
H.~Farahat and G.~Higman.
\newblock The centres of symmetric group rings.
\newblock {\em Proc. Roy. Soc. (A)}, 250:212--221, 1959.

\bibitem{geissinger1978representations}
L.~Geissinger and D.~Kinch.
\newblock Representations of the hyperoctahedral groups.
\newblock {\em Journal of algebra}, 53(1):1--20, 1978.

\bibitem{Ivanov1999}
V.~Ivanov and S.~Kerov.
\newblock The algebra of conjugacy classes in symmetric groups, and partial
  permutations.
\newblock {\em Zap. Nauchn. Sem. S.-Peterburg. Otdel. Mat. Inst. Steklov.
  (POMI)}, 256(3):95--120, 1999.

\bibitem{kerber2006representations}
A.~Kerber.
\newblock {\em Representations of Permutation Groups I: Representations of
  Wreath Products and Applications to the Representation Theory of Symmetric
  and Alternating Groups}, volume 240.
\newblock Springer, 2006.

\bibitem{McDo}
I.G. Macdonald.
\newblock {\em Symmetric functions and Hall polynomials}.
\newblock Oxford Univ. Press, second edition, 1995.

\bibitem{specht1932verallgemeinerung}
W.~Specht.
\newblock Eine verallgemeinerung der symmetrischen gruppe.
\newblock 1932.

\bibitem{stembridge1992projective}
J.~R. Stembridge.
\newblock The projective representations of the hyperoctahedral group.
\newblock {\em Journal of Algebra}, 145(2):396--453, 1992.

\bibitem{touPhd14}
O.~Tout.
\newblock {\em Polynomialité des coefficients de structure des algèbres de
  doubles-classes}.
\newblock PhD thesis, Université de Bordeaux, November 2014.

\bibitem{toutejc}
O.~Tout.
\newblock Structure coefficients of the hecke algebra of $({S}_{2n}, {B}_n)$.
\newblock {\em The Electronic Journal of Combinatorics}, 21(4):P4--35, 2014.

\bibitem{Tout2017}
O.~Tout.
\newblock A general framework for the polynomiality property of the structure
  coefficients of double-class algebras.
\newblock {\em Journal of Algebraic Combinatorics}, 45(4):1111--1152, June
  2017.

\end{thebibliography}
\label{sec:biblio}
\end{document}